\newenvironment{remark}{{\bf Remark}}
\def\B{{\mathfrak B}}
\def\L{{\cal L}}
\def\O{{\cal O}}
\def\I{{\rm i}}
\def\Bx{\B_x}
\def\By{\B_y}
\def\abs#1{\left|{#1}\right|}
\def\dkf{{\mathrm d}}
\let\oldremark\remark\renewcommand{\remark}{\oldremark\normalfont}
\let\oldexample\example\renewcommand{\example}{\oldexample\normalfont}
\def\Example#1{\advance \examplenumber by 1
\subsection{Example \the\examplenumber\ (#1)}}
\def\Ki_#1^#2{{\mathscr K}_{#1}^{#2}}
\def\sopmatrix#1{\begin{pmatrix} #1 \end{pmatrix}}
\def\figref#1{Figure~\ref{fig:#1}}
\def\addtab#1={#1\;&=}
\def\meeq#1{\def\ccr{\\\addtab}
%\tabskip=\@centering
 \begin{align*}
 \addtab#1
 \end{align*}
  }
\def\vc#1{\mbox{\boldmath$#1$\unboldmath}}
\def\Figure#1#2\par{
\begin{figure}[tb]
\begin{center}{
\includegraphics{Figures/#1}}
\end{center}
\caption{#2}\label{fig:#1} 
\end{figure}
}
\def\Figurew#1#2#3\par{
\begin{figure}[tb]
\begin{center}{
\includegraphics[width=#2]{Figures/#1}}
\end{center}
\caption{#3}\label{fig:#1} 
\end{figure}
}
\def\Figuretwow#1#2#3#4\par{
\begin{figure}[tb]
\begin{center}{
\includegraphics[width=#3]{Figures/#1}\includegraphics[width=#3]{Figures/#2}}
\end{center}
\caption{#4}\label{fig:#1} 
\end{figure}
}
\def\Figuretwo#1#2#3\par{
	\Figuretwow{#1}{#2}{0.48 \hsize}
		#3\par	
}
\def\pr(#1){\left({#1}\right)}
\def\br[#1]{\left[{#1}\right]}
\def\set#1{\left\{{#1}\right\}}
\def\ip<#1>{\left\langle{#1}\right\rangle}
\def\iip<#1>{\left\langle\!\langle{#1}\right\rangle\!\rangle}
\def\fpr(#1){\!\pr({#1})}
\def\function#1#2{\expandafter\def\csname #1\endcsname(##1){#2\fpr({##1})}
				\expandafter\def\csname #1p\endcsname(##1){#2'\fpr({##1})}
				\expandafter\def\csname #1pp\endcsname(##1){#2''\fpr({##1})}
				\expandafter\def\csname #1pn\endcsname##1(##2){#2^{\pr({##1})}\fpr({##2})}				}
\def\defoperator#1#2{\expandafter\def\csname #1\endcsname[##1]{#2\!\br[{##1}]}}
\def\ffunction#1{\function{#1}{#1}}
\def\O(#1){{\cal O}\!\left(#1\right)}
\def\Oo(#1){{\rm o}\!\left({#1}\right)}
\def\fO(#1){\Oh\fpr({#1})}
\def\dkfn^#1#2{\,{\rm d}^#1 #2}
\def\dkf#1{\,{\rm d}#1}
\def\I{{\rm i}}
\def\H_#1^#2(#3){H_#1^{(#2)}\fpr({#3})}
\def\J_#1(#2){{\rm J}_#1\!\pr(#2)}
\def\ddxn^#1{{\dkf{}^#1 \over \dkfn^#1{x}}}
\def\seq_#1^#2#3{\set{#3_{#1},\ldots,#3_{#2}}}
\def\abs#1{\left|{#1}\right|}
\def\mapengine#1,#2.{\mapfunction{#1}\ifx\void#2\else\mapengine #2.\fi }
\def\map[#1]{\mapengine #1,\void.}
\def\mapenginesep_#1#2,#3.{\mapfunction{#2}\ifx\void#3\else#1\mapengine #3.\fi }
\def\mapsep_#1[#2]{\mapenginesep_{#1}#2,\void.}
\def\vcbr[#1]{\pr({#1})}
\def\bvect[#1,#2]{
{
\def\dots{\cdots}
\def\mapfunction##1{\ | \  ##1}
	\sopmatrix{
		 \,#1\map[#2]\,
	}
}
}
\def\vect[#1]{
{\def\dots{\ldots}
	\vcbr[{#1}]
}}
\def\vectt[#1]{
{\def\dots{\ldots}
	\vect[{#1}]^{\top}
}}
\def\Vectt[#1]{
{
\def\mapfunction##1{##1 \cr} 
\def\dots{\vdots}
	\sopmatrix{
		\map[#1]
	}
}}
\def\simlimit_#1{\,\,\,\sim \!\!\!\!\!\!\!\!\!{ \atop \scriptscriptstyle #1 }}
\def\XXint#1#2#3{{\setbox0=\hbox{$#1{#2#3}{\int}$}
     \vcenter{\hbox{$#2#3$}}\kern-.5\wd0}}
\def\rad^#1#2{\,\,{}^#1\!\!\!\!\sqrt{#2}\,}
\def\Figuretwofixed#1#2#3\par{
\Figuretwow{#1}{#2}{0.48 \hsize}{#3}\par
}
\title{The automatic solution of partial differential equations using a global spectral method}
\author{Alex Townsend\thanks{Department of Mathematics, Massachusetts Institute of Technology, 77 Massachusetts Avenue
Cambridge, MA 02139-4307. (ajt@mit.edu)} \and 
Sheehan Olver\thanks{School of Mathematics and Statistics, The University of Sydney, Sydney, Australia. (Sheehan.Olver@sydney.edu.au)} 
}
\begin{document}
\maketitle

\begin{abstract}
A spectral method for solving linear partial differential equations (PDEs) with variable coefficients and general boundary conditions defined on rectangular domains is described, 
based on separable representations of partial differential operators and the one-dimensional ultraspherical spectral method. 
If a partial differential operator is of splitting rank~$2$, such as the operator associated with Poisson or Helmholtz, 
the corresponding PDE is solved via a generalized Sylvester matrix equation, and a bivariate polynomial 
approximation of the solution of degree $(n_x,n_y)$ is computed in $\mathcal{O}((n_x n_y)^{3/2})$ operations. Partial 
differential operators of splitting rank~$\geq 3$ are solved via a linear system involving a block-banded matrix in 
$\mathcal{O}(\min(n_x^{3} n_y,n_x n_y^{3}))$ operations. 
Numerical examples demonstrate the applicability of our 2D spectral method to a broad class of PDEs, which includes elliptic and dispersive 
time-evolution equations.    
The resulting PDE solver is written in {\sc Matlab} and is publicly available as part of {\sc Chebfun}. It can resolve solutions requiring over a million degrees of freedom 
in under $60$ seconds. An experimental implementation in the Julia language can currently perform the same solve in $10$ seconds.
\end{abstract}

\begin{keywords}
Chebyshev, ultraspherical, partial differential equation, spectral method
\end{keywords}

\begin{AMS}
33A65, 35C11, 65N35
\end{AMS}

\section{Introduction}
This paper describes a spectral method for the solution of linear partial 
differential equations (PDEs) with variable coefficients defined on bounded rectangular domains $[a,b]\times[c,d]$ that take the form: 
\begin{equation}\label{eq:PDE}
\L u(x,y) = f(x,y), \qquad \mathcal{L} = \sum_{\vphantom{j=0}i=0}^{N_y} \sum_{j=0}^{\vphantom{N_y}N_x} \ell_{ij}(x,y)\frac{\partial^{i+j}}{\partial y^i \partial x^j},
\end{equation}
where $N_x$ and $N_y$ are the differential orders of $\mathcal{L}$ in the $x$- and $y$-variable, respectively, $f(x,y)$ and $\ell_{ij}(x,y)$ are 
functions defined on $[a,b]\times[c,d]$, and $u(x,y)$ is the desired solution. The operator $\mathcal{L}$ is called 
a linear partial differential operator (PDO). Many real-world 
phenomena can be formalized in terms of a PDE; see, for example,~\cite{Evan_10_01,Farlow_93_01,Fritz_82_01}.

In addition,~\eqref{eq:PDE} should be 
supplied with $K_x,K_y\geq 0$ linear constraints, i.e.,
\[
\mathfrak{B}_x u(x,y) =\vc g(y), \qquad
\mathfrak{B}_y u(x,y) = \vc h(x),
\]
to ensure that there is a unique solution. Here, $\vc g$ and $\vc h$ are vector-valued functions with $K_x$ and $K_y$ components
and $\Bx$ and  $\By$ are linear operators acting on continuous bivariate functions, which usually, but not necessarily, represent boundary 
conditions on the left-right and top-bottom edges of $[a,b]\times [c,d]$. For example, if $\mathfrak{B}_x$ and  $\mathfrak{B}_y$ represent Dirichlet boundary conditions, then $K_x = K_y=2$,
\[
\mathfrak{B}_x u(x,y) = \begin{pmatrix}u(a,y)\cr  u(b,y)\end{pmatrix},\qquad \mathfrak{B}_y u(x,y) = \begin{pmatrix}u(x,c)\cr  u(x,d)\end{pmatrix},
\]
and $\vc g$ and $\vc h$ are the prescribed boundary data along the four edges. The spectral method we describe allows for general linear constraints such as  
Neumann and Robin boundary conditions, as well as the possibility of interior and integral constraints. Without loss of generality, the 
constraints are assumed to be linearly independent; otherwise, at least one of them can be removed while preserving the uniqueness of the solution.
In this paper we always assume there is a unique solution and seek an accurate numerical approximation to it. 

For integers $n_x$ and $n_y$ our spectral method returns a matrix $X\in\mathbb{C}^{n_y\times n_x}$ of bivariate Chebyshev expansion coefficients 
for the solution $u$~\cite[Sect.~2(c)]{Basu_73_01} such that  
\begin{equation}
 u(x,y) \approx \sum_{\vphantom{j=0}i=0}^{n_y-1} \sum_{j=0}^{\vphantom{n_y-1}n_x-1} X_{ij} T_i(\psi(y)) T_j(\phi(x)), \qquad (x,y)\in[a,b]\times [c,d],
\label{eq:bivariateChebyshevExpansion}
\end{equation} 
where $T_j(x) = \cos(j\cos^{-1}x)$ for $x\in[-1,1]$ is the degree $j$ Chebyshev polynomial (of the first kind), and $\phi(x) = 2(x-a)/(b-a)-1$ and $\psi(y) = 2(y-c)/(d-c) - 1$ 
are affine transformations from $[a,b]$ and $[c,d]$ to $[-1,1]$, respectively. The approximant in~\eqref{eq:bivariateChebyshevExpansion} 
is of degree $(n_x-1,n_y-1)$, i.e., of degree $n_x-1$ in $x$ and $n_y-1$ in $y$. In practice, we adaptively determine $n_x$ and $n_y$ so that the computed bivariate 
polynomial in~\eqref{eq:bivariateChebyshevExpansion} uniformly approximates the solution on $[a,b]\times [c,d]$ to a high accuracy (see Section~\ref{sec:discretization}).   

There are many exemplary papers that focus on solving a specific PDE and developing specialized algorithms to do so, for example,~\cite{Buzbee_70_01,Haldenwang_84_01}. 
In contrast, we concentrate on what can be achieved by a general solver that is merely given a description of a PDE in a syntax close to the notation found in 
standard textbooks~\cite{Evan_10_01,Farlow_93_01,Fritz_82_01}. This opens up a wonderful opportunity for a computational scientist to creatively explore and 
investigate in a way that can be very fruitful.  As an example the following {\sc Matlab} code solves the Helmholtz 
equation given by $u_{xx} + u_{yy} + 1000u = \cos(10xy)$ on $[-1,1]\times [-1,1]$ with non-homogeneous Dirichlet conditions:
\begin{verbatim}
 N = chebop2(@(u) laplacian(u) + 1000*u);    % N = u_xx+u_yy+1000u
 N.lbc = 1; N.rbc = 1; N.dbc = 1; N.ubc = 1; % u=1 at boundary
 f = chebfun2(@(x,y) cos(10*x.*y));          % Construct rhs
 u = N \ f;                                  % Solve PDE
\end{verbatim}

The final numerical solution \texttt{u} is represented in the {\sc Matlab} package {\sc Chebfun2}~\cite{Townsend_13_01} (an extension of {\sc Chebfun}~\cite{Chebfun} to
bivariate functions defined on rectangles) so that we are able to conveniently  
perform subsequent operations on the solution such as evaluation, differentiation, and integration. 
{\sc Chebfun2} represents a function by a bivariate polynomial approximation (stored in a compressed low rank form)~\cite{Townsend_13_01}. A {\em chebfun2} (in lower case letters) is any approximant constructed by {\sc Chebfun2}.  In the Helmholtz example above the solver determines that $n_x=n_y=257$ is sufficient to uniformly 
approximate the solution to~$10$ digits of accuracy. 

While our PDE solver is relatively general, it does offer the following benefits: 
\begin{itemize}
\item {\it Fast computation:} We retain the $\O((n_x n_y)^{3/2})$ complexity achieved in~\cite{Shen_95_01} for solving Poisson and Helmholtz equations, 
while allowing for general linear constraints. The same complexity extends to any linear PDE associated to an operator with a splitting rank 
of~$2$ (see Section~\ref{sec:PDErank}).
\item {\it Numerical accuracy:} The final polynomial approximant usually approximates the PDE solution to an accuracy close to machine precision relative to the absolute maximum of the solution (see Section~\ref{sec:numericalexamples}).  
\item {\it Spectral convergence with general linear constraints:} If the solution to a PDE is smooth, then there are many 
methods that achieve spectral convergence, but usually for very specific boundary conditions requiring the selection of an 
appropriate basis to be selected in advance~\cite{Julian_09_01,Shen_95_01}.  Here, our spectral method handles general linear constraints such 
as Dirichlet, Neumann, or Robin conditions in an automatic manner (see Section~\ref{sec:sylv}) and always represents the final solution in the 
tensor product Chebyshev basis.  
\item {\it Accuracy for solutions with weak singularities:} The solution to a linear PDE with smooth variable coefficients 
defined on a rectangular domain can have weak corner singularities (consider $-\nabla^2 u = 1$ with homogeneous Dirichlet boundary conditions~\cite[p.~38]{Boyd_01_01}). To globally resolve such a solution a
high degree bivariate polynomial approximation may be required.  The solver we describe is fast and numerically stable 
so high degree approximants can be reliably computed to resolve solutions with weak corner singularities.
\item {\it Automated PDE solver:} The resulting PDE solver is supplied with an anonymous function handle defining a PDO together with 
linear constraints. The discretization required to resolve the solution 
is automatically determined. The solver returns an accurate polynomial approximation of the solution represented as a chebfun2.
\end{itemize}

The PDE solver that we develop is ideal for problems where the solution is relatively smooth and the constraints on the solution 
can be written as boundary conditions. It is particularly efficient when the corresponding partial differential operator is 
of splitting rank~$1$ or~$2$ (see Section~\ref{sec:PDErankRepresentation}).  
Since the underlying discretization is a spectral method, our PDE solver should
not be used when the solution is expected to have discontinuities in low-order 
derivatives or singularities in the interior of the domain. 

The original motivation for this paper was to develop a 2D analogue of the {\sc Chebop} system~\cite{Driscoll_08_01}, which solves ordinary differential equations (ODEs) on bounded intervals in an automated 
manner using an adaptive 1D spectral collocation method. Our 2D spectral method has a different underlying methodology, but the user interface closely resembles that of its predecessor. 
In particular, the backslash command~\texttt{x = A\symbol{`\\}b} for solving 
linear systems in {\sc Matlab} that is overloaded (in the computer programming sense of the term) by {\sc Chebop} to solve linear ODEs in~\cite{Driscoll_08_01} is
now overloaded by {\sc Chebop2} for linear 2D PDEs, i.e.,~\texttt{u = N\symbol{`\\}f} (see the code snippet above). 

There are several stages of our solver that can be summarized as follows: 
\begin{enumerate}
	\item Interpret the anonymous handles for the PDO and linear constraints using automatic differentiation (see Section~\ref{sec:userInput}).
	\item Construct a separable representation (a sum of tensor products of linear ordinary differential operators) for the PDO, represent 
the ordinary differential operators with the ultraspherical spectral method (see Section~\ref{sec:ultraspherical}), and then discretize to form a generalized Sylvester matrix equation with an $n_y\times n_x$ solution matrix (see Section~\ref{sec:discretization}).
        \item Impose the linear constraints on the solution matrix and solve the resulting matrix equation using either a fast Sylvester solver for PDOs of splitting rank~$2$ or a block-banded matrix solver for operators with a splitting rank~$\geq3$ (see Section~\ref{sec:sylv}).
        \item Apply a resolution check. If the solution is unresolved in the 1st or 2nd variable, then increase $n_x$ or $n_y$ accordingly, and go back to step 2; otherwise, go to step 5. 
        \item Represent the solution as a chebfun2. 
\end{enumerate}

Figure~\ref{fig:workflow} summarizes these five stages.  Each stage is explained in more detail in subsequent sections.  
Throughout the paper we describe the spectral method for PDEs on $[-1,1]\times [-1,1]$ (to avoid the affine transformations in~\eqref{eq:bivariateChebyshevExpansion}), unless stated otherwise. The 
algorithm and software permits linear PDEs defined on bounded rectangular domains.  

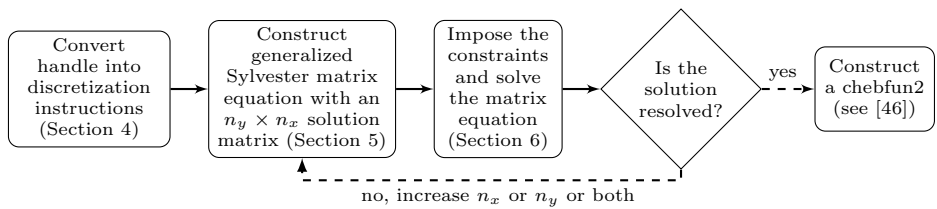
\begin{figure}
\centering
\scriptsize{
\tikzstyle{decision} = [diamond, draw, fill=white,
    text width=5em, text badly centered, node distance=2.2cm, inner sep=0pt]
\tikzstyle{block} = [rectangle, draw, fill=white,
    text width=5em, text centered, node distance=2.4cm, rounded corners, minimum height=4em]
\tikzstyle{line} = [draw, thick, color=black, -latex']
\hspace*{18pt}\begin{tikzpicture}[scale=2, node distance = 1cm, auto]
    % Place nodes
    \node [block,text width=7em] (init) {Convert handle into discretization instructions (Section~\ref{sec:userInput})};
    \node [block, right= .5cm of init,text width=8.2em] (evaluate) {Construct generalized Sylvester matrix equation with an $n_y\times n_x$ solution matrix (Section~\ref{sec:discretization})};
    \node [block, right= .5cm of evaluate,text width=5.4em] (newton) {Impose the constraints and solve the matrix equation (Section~\ref{sec:sylv})};
    \node [decision, right= .5cm of newton] (converged) {Is the solution resolved?};
    \node [block, right=.7cm of converged] (weights) {Construct a chebfun2 (see \cite{Townsend_13_01})};
    \node [, below= .3cm of newton] (no) {no, increase $n_x$ or $n_y$ or both};
    % Draw edges
    \path [line] (init) -> (evaluate);
    \path [line] (newton) -- (converged);
    \path [line] (evaluate) -- (newton);
    \path [line,dashed] (converged) -- node [, color=black,xshift=-2pt] {yes} (weights);
    %\path [line,dashed] (converged.south) -node [, color=black] {no} (evaluate.south);
   \draw[|-,-|,-, dashed, thick,-latex] (converged.south)  |-+(0,-.25em)-|  (evaluate.south);
\end{tikzpicture}}
\caption{Our work-flow for solving linear PDEs defined on bounded rectangular domains. The intermediate generalized Sylvester matrix 
equations are solved in three different ways depending on their exact form (see Section~\ref{sec:matrixsolver}). }
\label{fig:workflow}\end{figure}   

In the next section we briefly describe some existing spectral methods for solving linear PDEs, and in 
Section~\ref{sec:ultraspherical} we introduce the ultraspherical spectral method.
In Section~\ref{sec:userInput} we define the splitting rank of a PDO and explain 
how it can be calculated from the anonymous handle for the operator using automatic differentiation. 
In Section~\ref{sec:discretization} we show how PDEs can be reduced to a generalized Sylvester matrix equation with linear constraints, and  
in Section~\ref{sec:sylv} we describe how to solve these constrained matrix equations. 
Finally, in Section~\ref{sec:numericalexamples} we present several numerical examples showing the generality 
of the solver before discussing possibilities for future work in Section~\ref{sec:future}. 

\begin{remark}
An experimental and rapidly developing implementation of the solver is available in the 
{\sc ApproxFun} package~\cite{ApproxFun} written in the Julia language~\cite{Julia}, which is
faster than the {\sc Matlab} implementation and supports additional bases. However, it does not currently include 
all the features described in this paper --- e.g.,  automatic differentiation
and certain splitting rank calculations --- so we 
focus on the {\sc Matlab} timings throughout, with footnotes of Julia timings for 
comparison.
\end{remark}

\section{Existing spectral methods for PDEs}
Here, we give a brief survey of spectral collocation methods~\cite{Fornberg_98_01,Trefethen_00_01}, 
spectral Galerkin methods~\cite{Shen_04_01,Julian_09_01}, spectral element methods~\cite{Patera_84_01}, and hierarchical methods~\cite{Martinsson_13_01}, as compared to the approach that we introduce. 
A more comprehensive survey can be found in~\cite{Hussaini_84_01,Kopriva_09_01}. 

\subsection{Spectral collocation methods}
Spectral collocation methods or pseudospectral methods are arguably the most convenient and widely 
applicable spectral method for PDEs. They are usually based on
tensor product grids, where the PDO is discretized by its action 
on values of an interpolating polynomial~\cite{Fornberg_98_01,Trefethen_00_01}. In 1D it is well-known that collocation methods 
lead to dense and typically ill-conditioned linear systems~\cite{Canuto_88_01}. In 2D the situation is worse as the dense linear systems are typically squared times larger in size and condition number, resulting in $\O((n_x n_y)^3)$ complexity. 
Therefore, 2D collocation methods are restricted to quite small discretization sizes~\cite{Fornberg_98_01,Trefethen_00_01}. 

Typically, collocation methods incorporate
linear constraints on the solution by {\em boundary bordering}, which replaces rows 
of a linear system by ``boundary'' rows~\cite{Boyd_01_01} that constrain the solution's values.
Sometimes it is not clear which row of the linear system should be replaced and an idea called 
rectangular spectral collocation can be used to impose boundary rows in a natural way~\cite{Driscoll_14_01}. Boundary bordering 
requires the construction of a large dense matrix. In Section~\ref{sec:userInput} we show how a separable representation of a partial 
differential operator with a splitting rank of~$2$ can be automatically computed and the associated PDE then solved by a fast 
Sylvester solver. This could be used in conjunction with a collocation method to solve some PDEs without 
constructing large ill-conditioned linear systems. Unfortunately, there is 
no convenient way to carry out boundary bordering in the matrix equation setting. 
Instead, we impose the constraints on the solution by a different, but equally general 
strategy (see Section~\ref{sec:sylv}). 

\subsection{Spectral Galerkin methods}
Spectral Galerkin methods employ global basis functions that usually depend on either the PDE, the 
linear constraints, or both. They can be derived to respect a particular underlying structure, for instance,
self-adjoint elliptic PDEs can be discretized by symmetric linear systems~\cite{Shen_95_01,Shen_96_01}. 
The resulting matrices can also be well-conditioned and block banded. For example, Shen considers several Chebyshev-based methods 
for elliptic PDEs~\cite{Shen_04_01,Shen_07_01,Shen_09_01} and
Julian and Watson employs a recombined Chebyshev basis
to achieve block banded and well-conditioned linear systems~\cite{Julian_09_01}.  

Galerkin methods usually incorporate any linear constraints by {\em basis recombination}, where the basis is constructed so that any linear 
combination is guaranteed to satisfy the constraints~\cite{Boyd_01_01}.
We find this makes Galerkin methods less applicable for a general PDE solver because designing 
the ``right'' basis is often more of an art than a science. For more exotic linear constraints there 
may not be a convenient basis readily available. 

Galerkin methods often assemble discretizations of the differential equation 
by employing a quadrature scheme that approximates the variation form of the 
equations. The way that we construct discretizations is actually equivalent, though
it does not seem so because of two fundamental differences:
(1) We do not require a quadrature rule because the integrals that appear
can be written down explicitly via recurrence relations that are satisfied by the 
orthogonal polynomials; and, (2) Our process is completely automated  
preventing us from simplifying algebraic manipulations.

\subsection{Operational tau method}\label{subsec:OperationalTau}
The operational tau method requires a tensor product orthogonal polynomial basis, where 
the PDO is discretized by its action on a matrix of 
coefficients of a bivariate polynomial~\cite{Ortiz_81_01}. In 2D the resulting linear systems are 
usually block banded from below but are otherwise dense and ill-conditioned. This approach suffers 
in a similar way to collocation methods for large discretization sizes. 

The operational tau method was popularized 
and extended by Ortiz and his colleagues~\cite{Gottlieb_77_01,Ortiz_94_01}. It is 
a useful scheme for constructing a general PDE solver, but we use the ultraspherical spectral 
method~\cite{Olver_13_01} instead, because it results in well-conditioned matrices and a 
PDE solver with a lower complexity.

\subsection{Spectral element and hierarchical methods}
Spectral element methods were introduced in~\cite{Patera_84_01} with the underlying principle of 
combining the generality of finite element methods for complex geometries with 
the accuracy of spectral methods. Typically, in 2D a domain is partitioned into rectangular regions so that on each 
subdomain the solution to a PDE 
can be represented with a low degree bivariate polynomial. Then, each subdomain is solved by a spectral method 
together with coupling conditions that impose global continuity on the solution. 
When spectral element methods are employed together with a domain decomposition method~\cite{Zanolli_87_01}, such 
as the Schwarz algorithm~\cite{Canuto_88_02} or the hierarchical Poincare--Steklov scheme~\cite{Gillman_13_01},
the resulting PDE solver has a complexity of $\mathcal{O}(N^{3/2})$ or even $\mathcal{O}(N)$, where $N$ is the total number of degrees of 
freedom used to represent the solution. Such methods always compute 
solutions that are piecewise smooth, which can lead to a suboptimal 
number of degrees of freedom required. This is particularly the case 
for highly oscillatory solutions such as those satisfying the Helmholtz equation 
with a high wavenumber. 
The spectral method we describe constructs a globally smooth approximant and hence oscillatory solutions are represented by a near-optimal number of degrees of freedom, though it currently lacks the flexibility that spectral element methods have for solving PDEs on complicated domains.  

\section{The ultraspherical spectral method}\label{sec:ultraspherical} 
A fundamental component of our PDE solver is a spectral method for linear ordinary 
differential equations (ODEs) that leads to spectrally 
accurate discretizations and almost banded\footnote{A matrix is {\em almost banded} 
if it is banded except for a small number of columns or rows.} well-conditioned matrices. 
This section reviews the ultraspherical spectral method (for further details see~\cite{Olver_13_01}). 
This will form the basis of our 2D spectral method.
 
First, consider a linear ODE with constant coefficients defined on $[-1,1]$ of the following form:
\begin{equation}
 a_N \frac{d^Nu}{dx^N} + \cdots + a_1 \frac{du}{dx} + a_0u = f, \qquad N\geq 1,  
\label{eq:univariateODE}
\end{equation}
where $a_0,\ldots,a_N$ are complex numbers, $f$ is a univariate function, and $u$ is the unknown solution. Furthermore, 
assume that 
the ODE is supplied with $K$ linear constraints, i.e., $\mathfrak{B} u = \mathbf{c}$ where $\mathfrak{B}$ is a linear operator and $\mathbf{c} \in \mathbb{C}^K$, so 
that the solution to~\eqref{eq:univariateODE} is unique. The ultraspherical spectral method aims to find the solution of~\eqref{eq:univariateODE} represented in
the Chebyshev basis and compute a vector of Chebyshev expansion coefficients of the solution.  That is, the 
spectral method seeks to find an infinite vector $\mathbf{u} = \left(u_0,u_1,\ldots\right)^T$ such that
\[
u(x) = \sum_{j=0}^{\infty} u_j T_j(x), \qquad x\in[-1,1], 
\]
where $T_j$ is the degree $j$ Chebyshev polynomial.

Classically, spectral methods represent differential operators by dense matrices~\cite{Boyd_01_01,Fornberg_98_01,Trefethen_00_01}, but the 
ultraspherical spectral method employs a ``sparse'' recurrence relation
\[
 \frac{d^\lambda T_n}{dx^\lambda} = \begin{cases} 2^{\lambda-1}n(\lambda-1)!\,C^{(\lambda)}_{n-\lambda}, & n\geq \lambda,\\ 0, & 0\leq n\leq \lambda-1, \end{cases}
\]
where $C^{(\lambda)}_{j}$ is the ultraspherical polynomial with an integer parameter $\lambda\geq 1$ of degree $j$~\cite[Sect.~18.3]{NISTHandbook}. 
This results in a sparse representation of first and higher order differential operators. The differentiation operator 
for the $\lambda$th derivative is given by
\[
\mathcal{D}_\lambda = 
2^{\lambda-1} (\lambda-1)!
\begin{pmatrix}
\overbrace{ 0\quad\cdots\quad0}^{\mbox{$\lambda$ times}}&\lambda&&&\\
& &\lambda+1&\\
& & &\lambda+2&\\
   & & & &\ddots\\
\end{pmatrix}, \qquad \lambda\geq 1.
\]
For $\lambda\geq 1$, $\mathcal{D}_\lambda$ maps a vector of Chebyshev expansion coefficients to a vector of $C^{(\lambda)}$ expansion coefficients of the $\lambda$th derivative. 
For $\lambda = 0$, $\mathcal{D}_0$ is the identity operator.

Since $\mathcal{D}_\lambda$ for $\lambda \geq 1$ returns a vector of ultraspherical expansion coefficients, 
the ultraspherical spectral method also requires {\em conversion operators}, denoted by $\mathcal{S}_\lambda$ for $\lambda\geq 0$. The operator
$\mathcal{S}_0$ converts a vector of Chebyshev coefficients to a vector of $C^{(1)}$ coefficients and, more generally, 
$\mathcal{S}_\lambda$ for $\lambda\geq 1$ converts a vector of $C^{(\lambda)}$ coefficients to a vector of $C^{(\lambda+1)}$ coefficients. 
Using the relations in~\cite[(18.9.7) and (18.9.9)]{NISTHandbook} it can be shown that (see~\cite{Olver_13_01} for a derivation)
\[
\mathcal{S}_0  = \begin{pmatrix} 
1 &  0& - \frac{1}{2} \\[3pt] 
&\frac{1}{2} &0 & -\frac{1}{2} \cr
&& \frac{1}{2} &0&\ddots \cr
&&& \frac{1}{2} &\ddots \cr
&&&&\ddots
\end{pmatrix}, \qquad 
\mathcal{S}_\lambda = \begin{pmatrix}
1&       0     & -\frac{\lambda}{\lambda+2} &               & \\[3pt]
 & \frac{\lambda}{\lambda+1}&         0     &  -\frac{\lambda}{\lambda+3} &\cr
 &            & \frac{\lambda}{\lambda+2}  &     0    & 
\ddots\cr
&&            & \frac{\lambda}{\lambda+ 3} &\ddots         \cr
&&&&\ddots
\end{pmatrix}, \quad \lambda\geq 1.
\]
Note that for $\lambda\geq 1$, the operator $\mathcal{S}_0^{-1}\cdots\mathcal{S}_{\lambda-1}^{-1}\mathcal{D}_\lambda$ is dense and upper-triangular. This is the operator that represents 
$\lambda$th order differentiation in the Chebyshev basis without converting to ultraspherical bases~\cite{Ortiz_81_01}. It is upper-triangular but otherwise dense. 

We can combine our conversion and differentiation operators to represent the ODE in~\eqref{eq:univariateODE} as follows:
\begin{equation}
\left(a_N \mathcal{D}_N +a_{N-1} \mathcal{S}_{N-1}\mathcal{D}_{N-1} + \cdots + a_0\mathcal{S}_{N-1}\cdots\mathcal{S}_0\mathcal{D}_0\right) \mathbf{u} = \mathcal{S}_{N-1}\cdots\mathcal{S}_{0}\mathbf{f}, 
\label{eq:ODErepresentation}
\end{equation}
where $\mathbf{u}$ and $\mathbf{f}$ are vectors of Chebyshev expansion coefficients of $u$ and $f$, respectively. 
The conversion operators are used in~\eqref{eq:ODErepresentation} to ensure that the resulting linear combination maps 
Chebyshev coefficients to $C^{(N)}$ coefficients, and the right-hand side $f$ is represented by a vector of $C^{(N)}$ expansion coefficients.

To make the solution to~\eqref{eq:ODErepresentation} unique we must impose the $K$ prescribed 
linear constraints in $\mathcal{B}$ on $\mathbf{u}$. That is, we must represent the action of the linear constraints on 
a vector of Chebyshev coefficients. For example, Dirichlet boundary conditions take the form
\[
\mathcal{B} = \begin{pmatrix}T_0(-1) & T_1(-1)& T_2(-1) & T_3(-1) & \cdots \\[3pt] T_0(1) & T_1(1)& T_2(1)& T_3(1) & \cdots \end{pmatrix} = \begin{pmatrix}1 & -1 & 1 & -1 &\cdots \\[3pt] 1 & 1 & 1 & 1 & \cdots\end{pmatrix},
\]
because $\mathcal{B}\mathbf{u} = (u(-1),u(1))^T$, and Neumann conditions at $x=\pm 1$ take the form 
\[
\mathcal{B} = \begin{pmatrix}T_0'(-1) & T_1'(-1)& T_2'(-1) & T_3'(-1) &\cdots \\[3pt] T_0'(1) & T_1'(1)& T_2'(1) & T_3'(1) &\cdots \end{pmatrix} = \begin{pmatrix}0 & -1 & 4 & -9 & \cdots \\[3pt] 0 & 1 & 4 & 9 & \cdots\end{pmatrix},
\]
because $\mathcal{B}\mathbf{u} = (u'(-1),u'(1))^T$. In general, any linear constraint can be represented by its action on 
a vector of Chebyshev coefficients. 

Finally, to construct a linear system that can be solved for the first $n$ Chebyshev coefficients of $u$ we take the $n\times n$ 
finite section. Let $\mathcal{P}_n$ be the {\em truncation operator} that maps
$\mathbb{C}^\infty$ to $\mathbb{C}^n$ such that $\mathcal{P}_n\mathbf{u} = \left(u_0,\dots,u_{n-1}\right)^T$.
We take the first $n$ columns of $\mathcal{B}$, $B = \mathcal{B} \mathcal{P}_n^T$, the $(n-K)\times n$ principal
submatrix of $\mathcal{L}$, $L = \mathcal{P}_{n-K}\mathcal{L}\mathcal{P}_n^T$, and form the following linear system:  
\begin{equation}
\begin{pmatrix}B\cr L\end{pmatrix}  \mathcal{P}_{n}\mathbf{u} = \begin{pmatrix}\mathbf{c}\cr \mathcal{P}_{n-K} \mathcal{S}_{N-1}\cdots\mathcal{S}_{0}\mathbf{f}\end{pmatrix}.
\label{eq:linearsystem}
\end{equation}
Since the operators $\mathcal{D}_\lambda$ and $\mathcal{S}_\lambda$ are banded, the matrix $L$ is banded, and 
the resulting linear system is almost banded, i.e., banded except for $K$ rows  
imposing the linear constraints on $\mathbf{u}$. The $K$ rows in~\eqref{eq:linearsystem} 
that impose the linear constraints could also be placed below $L$, but we  place them above so that the linear system has a structure that is as close as possible to upper-triangular. 

\subsection{Multiplication matrices}
For ODEs with variable coefficients we need to be able to represent the multiplication operation 
$\mathcal{M}[a]u = a(x)u(x)$. Since the ultraspherical spectral method converts between different 
ultraspherical bases, we need to construct multiplication matrices for each ultraspherical basis. 

Suppose we wish to represent $\mathcal{M}[a]u$, where $a(x)$ and $u(x)$ have Chebyshev expansions
\[
a(x) = \sum_{j=0}^\infty a_jT_j(x), \qquad u(x) = \sum_{j=0}^\infty u_jT_j(x),
\]
and we desire the Chebyshev expansion coefficients of $a(x)u(x)$. Define $\mathcal{M}_0[a]$ 
to be the operator that takes the vector of Chebyshev expansion coefficients of $u(x)$
and returns the vector of Chebyshev expansion coefficients of $a(x)u(x)$. It is shown in~\cite{Olver_13_01} that $\mathcal{M}_0[a]$ can be written 
as the following Toeplitz-plus-Hankel-plus-rank-$1$ operator:
\[ 
\mathcal{M}_0[a] = \frac{1}{2}
\left[
\begin{pmatrix}
2a_0& a_1 & a_2 & a_3 &\ldots\\
a_1 & 2a_0& a_1 & a_2 &\ddots\cr
a_2 &a_1 & 2a_0& a_1 &\ddots\cr
a_3 &a_2 &a_1 & 2a_0&\ddots\cr
\vdots & \ddots & \ddots &\ddots&\ddots\cr
\end{pmatrix}
+ 
\begin{pmatrix}
0& 0 & 0 & 0 &\ldots\cr
a_1 & a_2& a_3 & a_4 &\ldots\cr
a_2 &a_3 & a_4& a_5 &\iddots\cr
a_3 &a_4 &a_5 & a_6&\iddots\cr
\vdots & \iddots & \iddots &\iddots&\iddots\cr
\end{pmatrix}
\right].
\]
This multiplication operator looks dense; however, if $a(x)$ is approximated 
by a polynomial of degree $m$, then $\mathcal{M}_0[a]$ is banded with a 
bandwidth of $m$. 

In practice, we adaptively determine the degree $m$ by constructing Chebyshev interpolants
of $a(x)$ of degree $m = 8$, $m = 16$, $m=32$, and so on, until the tail of the Chebyshev coefficients 
decay to essentially machine precision.   We emphasize that the adaptive approximation of the variable coefficients is a completely independent step from the solution of the differential equation: the discretization $m$ (dictated by $a$) is independent of the discretization $n$ (dictated by $u$).  The precise adaptive algorithm we employ is 
the Chebfun constructor~\cite{Chebfun} that has many heuristic features (as it must have), 
though it is based on a decade of practical experience with function approximation.  Further 
discussion is given in~\cite[Sec.~2]{Olver_13_01}. 

We also require multiplication operators $\mathcal{M}_\lambda[a]$ that represent multiplication of two $C^{(\lambda)}$ series. That is, if 
$\mathbf{u}$ is a vector of Chebyshev expansion coefficients of $u$, then the sequence of matrices
$\mathcal{M}_\lambda[a]\mathcal{S}_{\lambda-1}\cdots\mathcal{S}_0\mathbf{u}$ returns the $C^{(\lambda)}$ expansion coefficients of $a(x)u(x)$. 
In~\cite{Olver_13_01} an explicit formula for the entries of $\mathcal{M}_\lambda[a]$ for $\lambda \geq 1$ is given and in~\cite[Chap.\ 6]{Townsend_14_02}
it is shown that $\mathcal{M}_\lambda[a]$ satisfy a three-term recurrence relation. 

Figure~\ref{fig:bandedspyplot} (left) shows the typical structure of the nonzero entries in a linear system. 
The linear system in \eqref{eq:linearsystem} can be solved in $\O(n)$ operations by the QR factorization applied to a ``filled-in'' representation~\cite{Olver_13_01}. Furthermore, an adaptive procedure based on (F.\ W.\ J.) Olver's algorithm~\cite{OlversAlgorithm} can 
be derived to find the minimum value of $n$ required to resolve the solution to machine precision with essentially no extra cost.

\begin{figure}
\begin{minipage}{.49\textwidth}
\includegraphics[width=\textwidth]{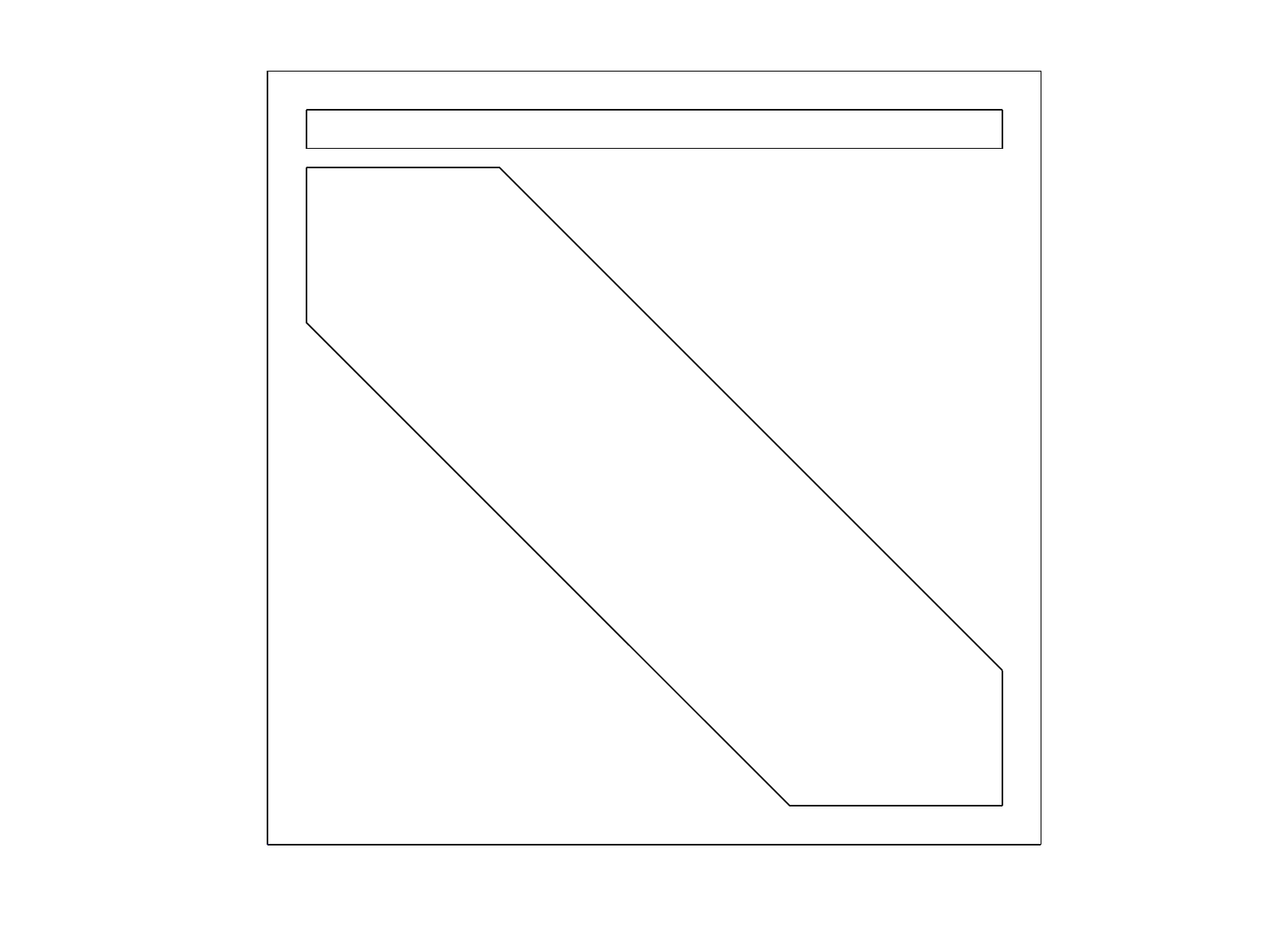}
\end{minipage}
\begin{minipage}{.49\textwidth}
\includegraphics[width=\textwidth]{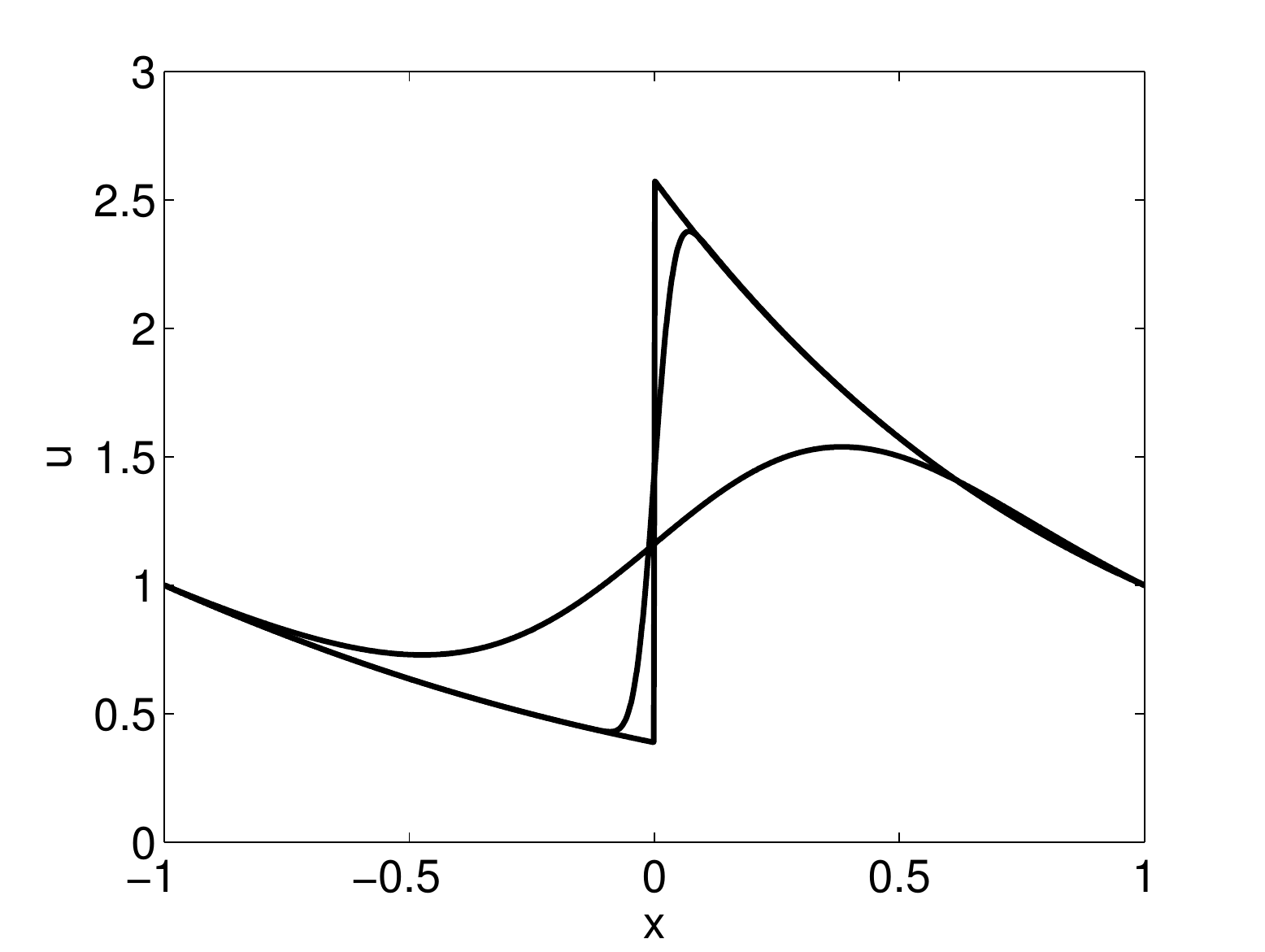}
\end{minipage}
\caption{Left: Typical structure of the matrices constructed by the ultraspherical spectral method, i.e., banded matrices except for a small number of dense rows. 
Right: The solution of $\epsilon u''(x) + xu'(x)+\sin(x)u(x) = 0$, $u(\pm 1) = 1$, 
for $\epsilon = 10^{-1}, 10^{-3}, 10^{-7}$.
The ultraspherical spectral method typically constructs well-conditioned matrices 
and hence, can resolve solutions that require large linear systems.}
\label{fig:bandedspyplot}
\end{figure}

Remarkably, this spectral method constructs not only almost banded matrices, but typically well-conditioned ones too~\cite[Lemma~4.4]{Olver_13_01}.  
Therefore, the ultraspherical spectral method is not plagued with the ill-conditioning associated to classical spectral methods. 
Figure~\ref{fig:bandedspyplot} (right) shows the solution to a singularly perturbed boundary value problem, $\epsilon u''(x) + xu'(x)+\sin(x)u(x) = 0$, $u(\pm 1) = 1$, for $\epsilon  = 10^{-1}, 10^{-3}, 10^{-7}$. 
For $\epsilon  = 10^{-7}$ a Chebyshev expansion of degree $22,\!950$ is required to approximate the solution to machine precision. 
We also observe high accuracy of the 2D spectral method we derive in this paper (see Section~\ref{sec:numericalexamples}).

\section{Automatic differentiation and separable representations}\label{sec:userInput}
We now describe the implementation and mathematics behind our 2D linear PDE solver. 
The user interface accepts input of a PDO as an anonymous handle, in a syntax 
that closely resembles how the equation is written in standard textbooks. This is achieved
in two main steps: (1) Interpret the anonymous handle for the PDO using automatic differentiation (see Section~\ref{sec:autodiff}), 
and (2) Calculate a separable representation for the PDO 
(see Sections~\ref{sec:PDErankRepresentation} and~\ref{sec:PDErank}). Once a separable representation 
has been constructed the PDE can be discretized by using the 1D ultraspherical spectral method. 

\subsection{Interpreting user-defined input using automatic differentiation}\label{sec:autodiff}
The {\sc Chebop2} interface uses automatic differentiation, more precisely, 
forward-mode operator overloading, which allows it to extract out the variable coefficients of 
a PDO given only an anonymous handle for the operator. A description of how to overload operators 
in {\sc Matlab} and implement automatic differentiation is given in~\cite{Neidinger_10_01}. 

As an example, suppose a user wants to solve a PDE with the differential equation 
$u_{xx} + u_{yy} + K^2u+yu = f$. The user could type the following into {\sc Chebop2}: 
\begin{verbatim}
 N = chebop2(@(x,y,u) diff(u,2,2) + diff(u,2,1) + k^2*u + y.*u);
\end{verbatim}
From this anonymous handle the solver derives all it needs to know about how to discretize the operator.

First, we evaluate the anonymous handle at objects \texttt{x}, \texttt{y}, and \texttt{u} 
from {\sc Matlab} classes that have their own versions of \texttt{diff}, \texttt{+}, \texttt{*}, and \texttt{.*} (the elementary operations in the anonymous handle). Then, as 
the handle is evaluated, these elementary operations are executed in a particular sequence, with each one not only computing the expected quantity but also updating an array for the variable coefficients. 
Since the individual operations are elementary, there is a simple rule on how each one should update the array of variable coefficients. 
Once complete, we have as a byproduct of the evaluation of the anonymous handle, an array containing the variable coefficients of the PDO. 
The {\sc Matlab} classes for \texttt{x}, \texttt{y}, and \texttt{u} have a growing dictionary of overloaded elementary operations so the user 
can express a PDO in a multitude of ways. A similar process is used to extract 
information from user input for the linear constraints for the PDE.  

Figure~\ref{fig:ADTree} shows how $u_{xx} + u_{yy} + K^2u+yu$ can be constructed by 
combining elementary operations. As the anonymous handle is evaluated, the tree is traversed from 
the leaves to the root node and at each node the variable coefficients of the PDO are updated. 

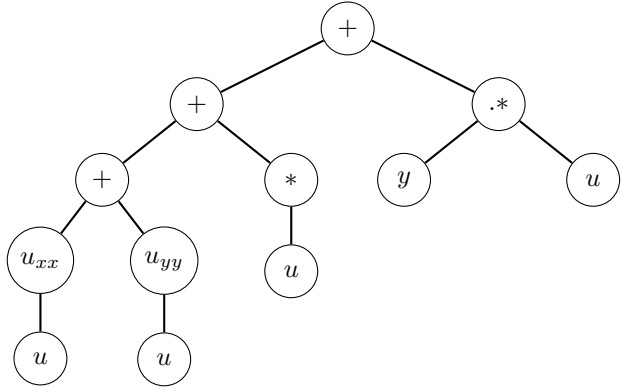
\begin{figure}
\centering
\tikzstyle{block} = [circle, draw, fill=white, minimum width=2em]
\tikzstyle{line} = [draw, thick, color=black]
\hspace*{18pt}\begin{tikzpicture}[scale=2, node distance = 1cm, auto]
    % Place nodes
    \node [block] (top) {$+$};
    \node [block, below right = .5cm and 1.5cm of top] (mult) {$.*$};
    \node [block, below left = .5cm and 1.5cm of top] (plus) {$+$};
    \node [block, below left = .5cm and .75cm of plus] (lowerplus) {$+$};
    \node [block, below right = .5cm and .75cm of plus] (vecMult) {$*$};
    \node [block, below = .5cm of vecMult] (seed3) {$u$};
    \node [block, below left = .5cm and .25cm of lowerplus] (uxx) {$u_{xx}$};
    \node [block, below right = .5cm and .25cm of lowerplus] (uyy) {$u_{yy}$};
    \node [block, below = .5cm of uxx] (seed1) {$u$};
    \node [block, below = .5cm of uyy] (seed2) {$u$};
    \node [block, below left = .5cm and .75cm of mult] (yseed) {$y$};
    \node [block, below right = .5cm and .75cm of mult] (seed4) {$u$};
    % Draw edges
    \path [line] (top) -- (mult);
    \path [line] (top) -- (plus);
    \path [line] (plus) -- (lowerplus);
    \path [line] (plus) -- (vecMult);
    \path [line] (vecMult) -- (seed3);
    \path [line] (lowerplus) -- (uxx);
    \path [line] (lowerplus) -- (uyy);
    \path [line] (uxx) -- (seed1);
    \path [line] (uyy) -- (seed2);
    \path [line] (mult) -- (yseed);
    \path [line] (mult) -- (seed4);
\end{tikzpicture}
\caption{A tree that shows how the expression $u_{xx}+u_{yy}+K^2u+yu$ can 
be constructed from $x$, $y$, $u$, and elementary operations. A tree can be traversed in an object-oriented 
language such as {\sc Matlab} by a simple
automatic differentiation technique known as forward-mode operator overloading and in the process the variable 
coefficients  for $u_{xx}+u_{yy}+K^2u+yu$ can be determined.}
\label{fig:ADTree}
\end{figure}

A one-dimensional version of this same process is described in more detail in~\cite{Birkisson_12_01}, where it is 
used to compute Fr\'{e}chet derivatives of ordinary differential equations.  Here, we are using the same technique 
except only extracting out the variable coefficients from the anonymous handle for the PDO (see~\cite{Neidinger_10_01} for more details). 

\subsection{Separable representations of partial differential operators}\label{sec:PDErankRepresentation}
A separable representation of a 2D object is a sum 
of\, ``products'' of 1D objects and in the case of linear PDOs those 1D objects are 
linear {\em ordinary differential operators} (ODOs).
We say that a linear PDO, $\mathfrak{L}$, has a splitting rank of~$\leq k$ if it can be written as a sum of $k$ tensor products of ODOs, 
\[
\mathfrak{L} = \sum_{j=1}^k \left(\mathfrak{L}_j^y\otimes \mathfrak{L}_j^x\right),
\]
and the {\em splitting rank}\footnote{Our definition of splitting rank differs from the rank of a linear operator in functional analysis. All nontrivial PDOs are of infinite rank, but usually have a finite splitting rank.} of a PDO is the minimum number of terms in such a representation.

\begin{definition} 
Let $\mathfrak{L}$ be a linear PDO in the form~\eqref{eq:PDE}. 
The splitting rank of $\mathfrak{L}$ is the smallest integer $k$ for which there exist linear ODOs 
$\mathfrak{L}^y_1,\ldots, \mathfrak{L}^y_k$ (acting on functions in $y$) and $\mathfrak{L}_1^x,\ldots, \mathfrak{L}_k^x$ (acting on functions in $x$) that satisfy
\begin{equation}
\mathfrak{L} = \sum_{j = 1}^k  \left(\mathfrak{L}^y_j\otimes \mathfrak{L}_j^x\right).
\label{eq:ranksum}
\end{equation} 
\label{def:rankoperator}
\end{definition}

A linear PDO of finite differential order with polynomial variable coefficients must itself have a 
finite splitting rank. To have an infinite splitting rank, one of its variable coefficients must be of infinite mathematical
rank (for a definition of the rank of a smooth bivariate function, see~\cite{Townsend_14_02}).  Smooth variable coefficients are approximated by polynomials, however, so  the PDEs that we consider have finite splitting rank for all practical purposes.

\subsection{Determining the splitting rank of a partial differential operator}\label{sec:PDErank}
One way to determine the splitting rank of a PDO is directly from Definition~\ref{def:rankoperator}. For example, the splitting rank of the Helmholtz operator $\partial^2/\partial x^2 + \partial^2/\partial y^2 + K^2$ is $2$ since 
\[
\partial^2/\partial x^2+\partial^2/\partial y^2+K^2 = \left(\mathfrak{I}\otimes\mathfrak{D}^2\right)+\left((\mathfrak{D}^2+K^2\mathfrak{I})\otimes\mathfrak{I}\right),
\]
where $\mathfrak{I}$ is the identity operator and $\mathfrak{D}$ is the first order differential operator. Furthermore, it can be shown that the 
splitting rank of $\partial^2/\partial x^2 + \partial^2/(\partial x\partial y) +\partial^2/\partial y^2 $ is $3$ and 
the splitting rank of $(2+\sin(x+y))\partial^2/\partial x^2 + e^{-(x^2+y^2)}\partial^2/\partial y^2$ is $4$. 
Another way, which allows it to be calculated by a computer, uses a technique motivated by umbral calculus~\cite{Bell_38_01}.
\begin{proposition} 
Let $\mathfrak{L}$ be a linear PDO in the form~\eqref{eq:PDE} with variable coefficients of finite rank. 
The splitting rank of $\mathfrak{L}$ is equal to the smallest integer $k$ required in an expression of the form
\begin{equation}
\sum_{i=0}^{N_y} \sum_{j=0}^{N_x} \ell_{ij}(s,t)y^ix^j = \sum_{j=1}^k c_j(t,y) r_j(s,x),
\label{eq:rankoperator}
\end{equation}
where $c_j$ and $r_j$ are bivariate functions. 
\label{lem:rankdiffop2}
\end{proposition}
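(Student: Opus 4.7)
The plan is to prove both inequalities separately via an explicit correspondence between splitting-rank decompositions of $\mathfrak{L}$ and rank decompositions of the umbral symbol $F(s,t,x,y) := \sum_{i,j} \ell_{ij}(s,t)\,y^i x^j$ when viewed as a function of the pair $(t,y)$ against the pair $(s,x)$.

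For the ``easy'' inequality, that $\mathrm{rank}(F) \leq$ splitting rank of $\mathfrak{L}$, I would start from a splitting-rank decomposition $\mathfrak{L} = \sum_{j=1}^k \mathfrak{L}^y_j \otimes \mathfrak{L}^x_j$. Each one-dimensional operator expands as $\mathfrak{L}^x_j = \sum_p a_{jp}(x)\,\partial^p/\partial x^p$ and $\mathfrak{L}^y_j = \sum_q b_{jq}(y)\,\partial^q/\partial y^q$. Matching coefficients of $\partial^{q+p}/(\partial y^q\partial x^p)$ against~\eqref{eq:PDE} gives $\ell_{qp}(x,y) = \sum_{j=1}^k b_{jq}(y)\,a_{jp}(x)$. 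Substituting into $F$ and rearranging yields
\[
F(s,t,x,y) = \sum_{j=1}^k \Bigl(\sum_q b_{jq}(t)\,y^q\Bigr)\Bigl(\sum_p a_{jp}(s)\,x^p\Bigr),
\]
which is a rank-$\leq k$ decomposition of the required form.

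For the reverse inequality I would start from a minimal rank-$k$ decomposition $F(s,t,x,y) = \sum_{j=1}^k c_j(t,y)\,r_j(s,x)$ and construct ODOs from it. The first step is to reduce to the case where each $c_j$ is a polynomial of degree $\leq N_y$ in $y$ and each $r_j$ is a polynomial of degree $\leq N_x$ in $x$. Since $F$ is itself polynomial of these bidegrees, the projectors $P_y$ onto $\mathrm{span}\{1,y,\dots,y^{N_y}\}$ and $P_x$ onto $\mathrm{span}\{1,x,\dots,x^{N_x}\}$ satisfy $F = P_y P_x F$. Because $P_y$ acts only on the $y$-dependence and $P_x$ only on the $x$-dependence, applying them termwise gives $F = \sum_{j=1}^k (P_y c_j)(t,y)\,(P_x r_j)(s,x)$, still with $\leq k$ terms but now with polynomial factors. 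Writing $P_y c_j(t,y) = \sum_q b_{jq}(t)\,y^q$ and $P_x r_j(s,x) = \sum_p a_{jp}(s)\,x^p$ and matching coefficients yields $\ell_{qp}(s,t) = \sum_j b_{jq}(t)\,a_{jp}(s)$, whence the ODOs $\mathfrak{L}^y_j := \sum_q b_{jq}(y)\,\partial^q/\partial y^q$ and $\mathfrak{L}^x_j := \sum_p a_{jp}(x)\,\partial^p/\partial x^p$ satisfy $\mathfrak{L} = \sum_{j=1}^k \mathfrak{L}^y_j \otimes \mathfrak{L}^x_j$, giving splitting rank $\leq k$.

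The main obstacle is the polynomial-reduction step. Without it, an arbitrary decomposition of $F$ with general bivariate $c_j$ and $r_j$ cannot be read directly as an ODO decomposition, because the umbral variables $y$ and $x$ must stand for derivative symbols of bounded order. The projection argument works precisely because $F$ lies in the finite-dimensional subspace spanned by $\{y^i x^j : 0 \leq i \leq N_y,\ 0 \leq j \leq N_x\}$, so projecting preserves $F$ while not increasing the number of separable terms.
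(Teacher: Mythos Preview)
Your argument is correct and follows essentially the same route as the paper: both proofs set up a one-to-one correspondence between splitting decompositions of $\mathfrak{L}$ and separable decompositions of the symbol $H(s,t,x,y)=\sum_{i,j}\ell_{ij}(s,t)y^ix^j$, with the paper packaging the coefficient-matching via the umbral map $\mathcal{T}[\ell(s,t)y^ix^j]=\ell(x,y)\,\partial^{i+j}/(\partial y^i\partial x^j)$ while you write it out explicitly. Your projection step (truncating $c_j,r_j$ to polynomials in $y,x$ before reading off ODOs) is a point the paper's proof passes over when it applies $\mathcal{T}$ directly to general $c_j(t,y)r_j(s,x)$, so your version is in fact slightly more careful there.
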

\begin{proof}
Let $\mathcal{T}$ be the linear operator\footnote{The definition of this operator is motivated by umbral calculus~\cite{Bell_38_01}.} defined by 
\[
\mathcal{T}\left[\ell(s,t) y^i x^j \right] = \ell(x,y)\frac{\partial^{i+j}}{\partial y^i \partial x^j}, \qquad i,j\geq 0,
\]
which replaces $s$ and $t$ by $x$ and $y$ and powers of $x$ and $y$ by partial derivatives.
Now, suppose that $\mathfrak{L}$ is a linear PDO with a splitting rank of~$r$ and $k$ is the minimum number of 
terms required in~\eqref{eq:rankoperator}. We will show that $r = k$. 
 
First, note that the linear operator $\mathcal{T}$ can be used to give the following relation: 
\[
\mathfrak{L} = \sum_{i=0}^{N_y} \sum_{j=0}^{N_x} \ell_{ij}(x,y)\frac{\partial^{i+j}}{\partial y^i \partial x^j} = \mathcal{T}\left[\sum_{i=0}^{N_y} \sum_{j=0}^{N_x} \ell_{ij}(s,t)y^ix^j\right]= \mathcal{T}\left[H(s,x,t,y)\right],
\]
where $H(s,x,t,y) = \sum_{i=0}^{N_y} \sum_{j=0}^{N_x} \ell_{ij}(s,t)y^ix^j$. Now, if
the function $H(s,x,t,y)$ can be written as $\sum_{j=1}^k c_j(t,y) r_j(s,x)$, then we have 
\[
\mathfrak{L} = \mathcal{T}\left[\sum_{j=1}^k c_j(t,y) r_j(s,x)\right] = \sum_{j=1}^k \mathcal{T}\left[c_j(t,y) r_j(s,x)\right] = \sum_{j=1}^k\mathcal{T}\left[c_j(t,y)\right]\otimes \mathcal{T}\left[r_j(s,x)\right],
\]
where $\mathcal{T}[c_j(t,y)]$ and $\mathcal{T}[r_j(s,x)]$ are ODOs with variable coefficients in $y$ and $x$, respectively, and hence $r\leq k$.  
Conversely, a separable representation for $\mathfrak{L}$ can be converted (using $\mathcal{T}$) to a low rank expression for $H$, and 
hence $k\leq r$. We conclude that $r=k$ and the splitting rank of $\mathfrak{L}$ equals the minimum number of terms required in~\eqref{eq:rankoperator}.
\end{proof}

A special case of Proposition~\ref{lem:rankdiffop2} gives a connection
between constant coefficient PDOs and bivariate polynomials. This connection has been previously used to investigate
polynomial systems of equations~\cite[Chap.~10]{Sturmfels_02_01}.  In particular, if $\mathfrak{L}$ has constant 
coefficients, then the splitting rank of $\mathfrak{L}$ can be calculated as the rank of a bivariate polynomial using the singular value decomposition of a function~\cite{Townsend_14_01}.
In general, for linear PDOs with variable coefficients the splitting rank of $\mathfrak{L}$ is the 
splitting rank of a function of four variables and can be calculated using a tensor-train 
decomposition of a function~\cite{Oseledets_11_01}.   

More generally, Proposition~\ref{lem:rankdiffop2} allows us to calculate a separable representation for a linear PDO
via a low rank representation of the associated function in~\eqref{eq:rankoperator}. Each term in the separable
representation involves a tensor product of two linear ODOs, which can be discretized using the 1D ultraspherical spectral method 
(see Section~\ref{sec:ultraspherical}). In Section~\ref{sec:sylv} a PDO with a splitting rank of~$k$ will be discretized by a generalized Sylvester equation with $k$ terms.

Quite surprisingly many standard linear PDOs have a splitting rank of~$2$ 
and Table~\ref{tab:RankPDE} presents a selection. Usually, but not always, a linear
PDO with variable coefficients has a splitting rank of~$\geq3$ and any ODO is a PDO with a splitting rank of~$1$.
\begin{table} 
\centering
\begin{tabular}{c c}
\toprule
PDO & Operator  \\[3pt]
\midrule
Laplace & $u_{xx} + u_{yy}$ \\[3pt] 
Helmholtz & $u_{xx} + u_{yy} + K^2u$ \\[3pt]
Heat & $u_t - \alpha^2 u_{xx}$\\[3pt] 
Transport & $u_t - bu_x$ \\[3pt]
Wave & $u_{tt} - c^2u_{xx}$\\[3pt] 
Euler--Tricomi & $u_{xx} - xu_{yy}$\\[3pt] 
Schr\"{o}dinger & $\I \epsilon u_t + \frac{1}{2}\epsilon^2 u_{xx} - V(x) u$ \\[3pt]
Black--Scholes & $u_t + \frac{1}{2}\sigma^2x^2u_{xx}  + rxu_x - ru$ \\[3pt]
\bottomrule
\end{tabular}
\caption{A selection of PDOs with a splitting rank of~$2$ (see Definition~\ref{def:rankoperator}). 
Many constant coefficient PDOs have a splitting rank of~$2$. An exception is the biharmonic 
operator, which has a splitting rank of~$3$.}
\label{tab:RankPDE}
\end{table}

\section{Discretization of a separable representation for a partial differential operator}\label{sec:discretization} 
Any PDO with a splitting rank of~$k$ (see~\eqref{eq:ranksum}) can be discretized to a 
generalized Sylvester matrix equation with $k$ terms, $A_1XC_1^T + \cdots + A_kXC_k^T$, where the matrices 
$A_1,\ldots,A_k$ and $C_1,\ldots,C_k$ are ultraspherical spectral discretizations of ODOs 
and $X$ is a matrix containing the bivariate Chebyshev expansion coefficients of the solution.  

Specifically, suppose we seek to compute a matrix $X\in\mathbb{C}^{n_y\times n_x}$ of bivariate Chebyshev expansion coefficients of the solution $u(x,y)$ to~\eqref{eq:PDE} satisfying
\begin{equation}
\left|u(x,y) - \sum_{i=0}^{n_y}\sum_{j=0}^{n_x} X_{ij}T_i(y)T_j(x)\right|= \mathcal{O}( \epsilon \|u\|_\infty ), \qquad (x,y)\in[-1,1]^2,
\label{eq:resolved}
\end{equation} 
where $\epsilon$ is machine precision. The ultraspherical spectral method can be used to represent the ODOs $\mathfrak{L}_1^y,\ldots,\mathfrak{L}_k^y$ and $\mathfrak{L}_1^x,\ldots,\mathfrak{L}_k^x$ 
in~\eqref{eq:ranksum} as matrices $\mathcal{L}_1^y,\ldots,\mathcal{L}_k^y$ and $\mathcal{L}_1^x,\ldots,\mathcal{L}_k^x$. These 
matrices can be truncated to derive the following generalized Sylvester matrix equation: 
\begin{equation}
A_1XC_1^T + \cdots + A_kXC_k^T = F, 
\label{eq:matrixequation}
\end{equation}
where $A_j = \mathcal{P}_{n_y}\mathcal{L}_j^y\mathcal{P}_{n_y}^T$ and $C_j = \mathcal{P}_{n_x}\mathcal{L}_j^x\mathcal{P}_{n_x}^T$ for $1\leq j\leq k$, and 
$F$ is the $n_y\times n_x$ matrix of bivariate Chebyshev expansion coefficients for the right-hand side $f$ in~\eqref{eq:PDE}. 

Typically, the matrix equation~\eqref{eq:matrixequation} does not have a unique solution as the 
prescribed linear constraints $\mathfrak{B}_x$ and $\mathfrak{B}_y$ must also be incorporated.
By investigating the action of $\mathfrak{B}_x$ on the basis $\{T_0(x),\ldots,T_{n_x-1}(x)\}$, we can discretize
any linear constraint of the form $\mathcal{B}_x u(x,y) = \mathbf{g}(y)$ as 
\[
 X B_x^T = G^T,
\]
where $B_x$ is an $K_x \times n_x$ matrix and $G$ is an $K_x \times n_y$ matrix containing
the first $n_y$ Chebyshev coefficients of each component of $\mathbf{g}$. Similarly, by investigating the action of $\mathcal{B}_y$ 
on the basis $\{T_0(y),\ldots,T_{n_y-1}(y)\}$ we can discretize $\mathcal{B}_y u(x,y) = \mathbf{h}(x)$ as 
\[
B_y X = H,
\]
where $H$ is an $K_y \times n_x$ matrix containing the first $n_x$ Chebyshev coefficients of each component of $\mathbf{h}$.  

For the constraints to be consistent the matrices $B_x$ and $B_y$ must satisfy the following {\em compatibility conditions}: 
\begin{equation}
HB_x^T = (B_yX)B_x^T = B_y(XB_x^T) = B_yG^T.
\label{eq:compatibilitycondition}
\end{equation}
For example, in order that Dirichlet conditions satisfy the compatibility conditions the boundary data must match 
at the four corners of $[-1,1]^2$. Section~\ref{sec:sylv} describes how to solve matrix equations of the form~\eqref{eq:matrixequation}
with linear constraints.

In practice, the solver determines the parameters $n_x$ and $n_y$ by progressively discretizing
the PDE on finer and finer grids until the solution is resolved. First, we discretize the PDE with $n_x=n_y=9$ and solve the resulting matrix equation~\eqref{eq:matrixequation}
under linear constraints (see Section~\ref{sec:sylv}). Then, we check if the Chebyshev coefficients in $X$ decay to below machine precision relative to the maximum entry of $X$ in 
absolute value. 
Roughly speaking, if the last few columns of $X$ are above relative machine precision, then the solution has not been resolved in the $x$-variable 
and $n_x$ is increased to $17,33,65$, and so on, and likewise if the last few rows in $X$ are above relative machine precision, then $n_y$ 
is increased to $17,33,65$, and so on. The exact resolution tests we employ are the same as those employed by {\sc Chebfun2}~\cite{Townsend_13_01}, which 
are heuristic in nature, but based on a significant amount of practical experience. The discretization parameters $n_x$ and $n_y$ 
are independently increased and the resolution test is performed in both directions after each solve. Usually, this means that 
the final solution satisfies~\eqref{eq:resolved}, though it is not an absolute guarantee. 

\section{Solving matrix equations with linear constraints}\label{sec:sylv}
In this section we describe how to solve the following matrix equation with linear constraints:
\begin{equation}
\sum_{j=1}^k A_j X C_j^T = F, \quad X\in\mathbb{C}^{n_y\times n_x}, \qquad B_y X = H , \qquad X B_x^T = G^T,
\label{eq:GeneralizedmatrixEquation}
\end{equation}
where $A_j\in\mathbb{C}^{n_y\times n_y}$, $C_j\in\mathbb{C}^{n_x\times n_x}$, $F\in\mathbb{C}^{n_y\times n_x}$, $B_y\in\mathbb{C}^{K_y\times n_y}$, $B_x\in\mathbb{C}^{K_y\times n_x}$, $H\in\mathbb{C}^{K_y\times n_x}$, and $G\in\mathbb{C}^{K_x\times n_y}$.
Our approach is to use the linear constraints to remove degrees of freedom in $X$ and thus obtain 
a generalized Sylvester matrix equation with a unique solution without constraints.

By assumption the prescribed linear constraints are linearly independent so 
the column ranks of $B_x$ and $B_y$ are $K_x$ and $K_y$, respectively.  
Without loss of generality, we further assume that the principal $K_x\times K_x$ and 
$K_y\times K_y$ submatrices of $B_x$ and $B_y$ are the identity matrices\footnote{Otherwise, permute the columns of $B_x$ and $B_y$,  and the corresponding rows/columns of $X$,
so the principal $K_x\times K_x$ and $K_y\times K_y$ matrices $\hat{B}_x$ and $\hat{B}_y$ are invertible, then redefine as $B_x \mapsto \hat{B}_{x}^{-1}B_x$, $G \mapsto\hat{B}_x^{-1}G$, $B_y \mapsto \hat{B}_{y}^{-1}B_y$, and $H \mapsto\hat{B}_{y}^{-1}H$.} $I_{K_x}$ and $I_{K_y}$. 
Then, we can modify the matrix equation in~\eqref{eq:GeneralizedmatrixEquation} to
\[
\sum_{j=1}^k A_j X C_j^T  - \sum_{j=1}^k (A_j)_{1:n_y,1:K_y} B_y X C_j^T = F - \sum_{j=1}^k (A_j)_{1:n_y,1:K_y} H C_j^T,
\]
where we have used the constraint $B_y X = H$. Moreover, by rearranging we have
\[
\sum_{j=1}^k A_j X C_j^T  - \sum_{j=1}^k (A_j)_{1:n_y,1:K_y} B_y X C_j^T = \sum_{j=1}^k  \left(A_j - (A_j)_{1:n_y,1:K_y} B_y\right) X C_j^T,
\]
and since the $K_y\times K_y$ principal matrix of $B_y$ is the identity matrix, each matrix $A_j - (A_j)_{1:n_y,1:K_y} B_y$ for 
$1\leq j\leq k$ is zero in the first $K_y$ columns. Similarly, 
the condition $XB_x^T = G^T$ can be used to further modify the matrix equation as follows:
\begin{equation}
\begin{aligned}
\sum_{j=1}^k  &\left(A_j - (A_j)_{1:n_y,1:K_y} B_y\right) X  \left(C_j - B_x(C_j)_{1:n_x,1:K_x}\right)^T\\
& = F - \sum_{j=1}^k (A_j)_{1:n_y,1:K_y} H C_j^T - \sum_{j=1}^k \left(A_j - (A_j)_{1:n_y,1:K_y} B_y\right) G^T (C_j)_{1:n_x,1:K_x}^T, 
\end{aligned}
\label{eq:matrixReducedEquation}
\end{equation}
so that the matrices $(C_j - B_x(C_j)_{1:n_x,1:K_x})^T$ for $1\leq j\leq k$ are zero in the first $K_x$ rows.

Now, the first $K_y$ columns of $A_j - (A_j)_{1:n_y,1:K_y} B_y$ and the first $K_x$ rows of $(C_j - B_x(C_j)_{1:n_x,1:K_x})^T$ are zero in~\eqref{eq:matrixReducedEquation}
and hence, the matrix equation is independent of the first $K_y$ rows and $K_x$ columns of $X$. Therefore, the matrix equation in~\eqref{eq:matrixReducedEquation}
can be reduced by removing those columns and rows and then solved, obtaining a matrix $X_{22}\in\mathbb{C}^{(n_y-K_y)\times (n_x-K_x)}$, where 
\[
X = \begin{pmatrix} X_{11} & X_{12} \cr X_{21} & X_{22}\end{pmatrix}, \quad X_{11}\in\mathbb{C}^{K_y\times K_x}, \quad X_{12}\in\mathbb{C}^{K_y\times (n_x-K_x)},\quad X_{21}\in\mathbb{C}^{(n_y-K_y)\times K_x}. 
\]
The solution of the resulting unconstrained generalized Sylvester equation that $X_{22}$ satisfies is given in Section~\ref{sec:matrixsolver}. 

Once we have computed $X_{22}$ we can recover $X$ by using the linear constraints. For instance, since $B_yX = H$ 
and the $K_y\times K_y$ principal submatrix of $B_y$ is the identity matrix, we have 
\[
X_{12} = H_2 - B_y^{(2)}X_{22},
\]
where $H = \left[H_1,H_2\right]$ with $H_1\in\mathbb{C}^{K_y\times K_x}$ and $H_2\in\mathbb{C}^{K_y\times (n_x-K_x)}$, and $B_y = [I_{K_y},B_y^{(2)}]$ with $B_y^{(2)}\in\mathbb{C}^{K_y\times (n_y-K_y)}$. 
Furthermore, since $XB_x^T = G^T$ and the $K_x\times K_x$ principal submatrix of $B_x$ is the identity matrix, we have 
\[
X_{21} = G_2^T - X_{22}(B_x^{(2)})^T,
\]
where $G = \left[G_1,G_2\right]$ with $G_1\in\mathbb{C}^{K_x\times K_y}$ and $G_2\in\mathbb{C}^{K_x\times (n_y-K_y)}$, and $B_x = [I_{K_x},B_x^{(2)}]$ with $B_x^{(2)}\in\mathbb{C}^{K_x\times (n_x-K_x)}$. 
Lastly, we can recover $X_{11}$ using either of the two formulas
\[
X_{11} = H_1 - B_y^{(2)}X_{21}, \qquad X_{11} = G_1^T - X_{12}(B_x^{(2)})^T,
\]
since the compatibility condition~\eqref{eq:compatibilitycondition} ensures that both formulas are equivalent.

\subsection{Solving a generalized Sylvester matrix equation}\label{sec:matrixsolver}
We are left with a standard generalized Sylvester matrix equation of the form
\begin{equation}
\sum_{j=1}^k \tilde{A}_j X_{22} \tilde{C}_j^T  = \tilde{F},
\label{eq:standardMatrixEquation}
\end{equation}
and the exact algorithm we use to solve for $X_{22}$ depends on $k$. 

If $k = 1$ then the matrix equation takes the form $\tilde{A}_1 X_{22} \tilde{C}_1^T = \tilde{F}$, and 
since we are using the ultraspherical spectral method (see Section~\ref{sec:ultraspherical}) the 
matrices $\tilde{A}_1$ and $\tilde{C}_1$ are almost banded. Therefore, we can solve $\tilde{A}_1 Y = \tilde{F}$ for $Y\in\mathbb{C}^{(n_y-K_y)\times (n_x-K_x)}$ 
in $\mathcal{O}(n_x n_y)$ operations and then solve $\tilde{C}_1 X_{22}^T = Y^T$ for $X_{22}$ in $\mathcal{O}(n_x n_y)$ operations using the adaptive QR method~\cite{Olver_13_01}.  

If $k=2$ then the matrix equation takes the form
\begin{equation}
 \tilde{A}_1 X_{22} \tilde{C}_1^T  +  \tilde{A}_2 X_{22} \tilde{C}_2^T= \tilde{F}.
\label{eq:matrixEquationRankTwo}
\end{equation}
To solve~\eqref{eq:matrixEquationRankTwo} we use the generalized Bartels--Stewart algorithm~\cite{Bartels_72_01,Gardiner_92_01}, 
which requires $\mathcal{O}(n_x^3 + n_y^3)$ operations. Alternatively, the generalized Hessenberg--Schur 
algorithm can be used~\cite{Gardiner_92_01} or the recursive blocked algorithms in RECSY (see~\cite{Jonsson_03_01}). It turns out that many standard PDOs with constant coefficients have 
a splitting rank of~$2$ (see Table~\ref{tab:RankPDE}).

For $k\geq 3$, we are not aware of an efficient algorithm for solving~\eqref{eq:standardMatrixEquation}. Instead,
we expand the matrix equation into an $(n_x-K_x)(n_y-K_y)\times (n_x-K_x)(n_y-K_y)$ linear system 
\begin{equation}
\left( \sum_{j=1}^k (\tilde{C}_j \otimes \tilde{A}_j) \right)\hbox{vec}(X_{22}) = \hbox{vec}(\tilde{F}),
\label{eq:linearsystemGeneral}
\end{equation} 
where `$\otimes$' denotes the Kronecker product operator for matrices and $\hbox{vec}(C)$ denotes the vectorization 
of the matrix $C$ formed by stacking the columns of $C$ into a single column vector.

Na\"\i vely solving the resulting linear system~\eqref{eq:linearsystemGeneral} requires $\mathcal{O}( (n_xn_y)^3 )$ operations.  
However, because we are using the ultraspherical spectral method the matrices $\tilde{A}_j$ and $\tilde{C}_j$ are almost banded and hence, the matrix $\sum_{j=1}^k (\tilde{C}_j \otimes \tilde{A}_j)$
is also almost banded with a bandwidth of $\mathcal{O}(n_x)$ except for $\mathcal{O}(n_x)$ dense rows. Thus, the linear system can be solved in 
$\mathcal{O}(n_x^2 (n_x n_y)) = \mathcal{O}(n_x^3 n_y)$ operations using the adaptive QR method~\cite{Olver_13_01}. Alternatively, 
the roles of $x$ and $y$ can be swapped and the linear system solved in $\mathcal{O}(n_x n_y^3)$ operations.

\begin{figure} 
\centering
\begin{overpic}[width=.49\textwidth]{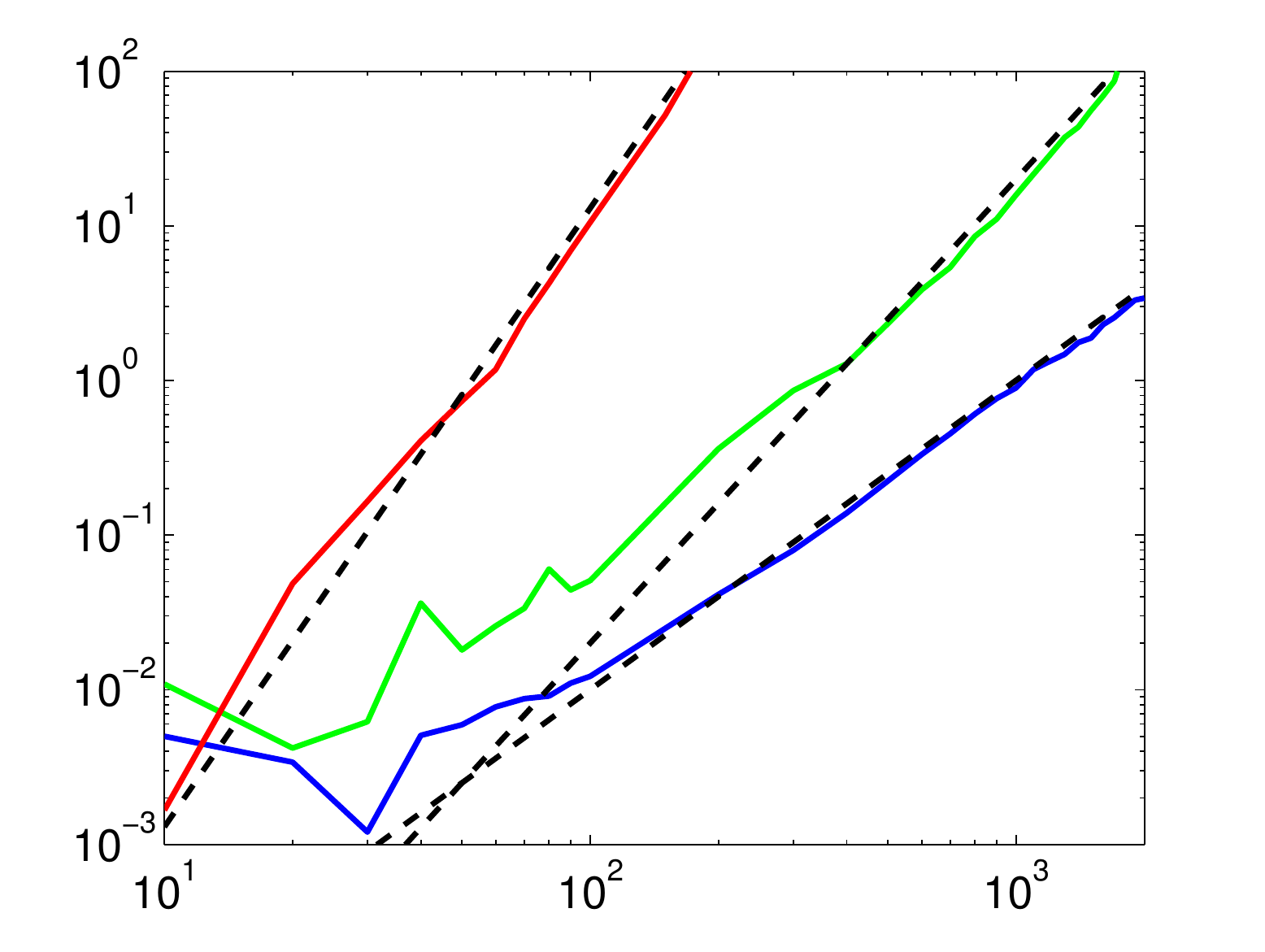} 
\put(61,47){\rotatebox{48}{$\mathcal{O}(n_x^3 + n_y^3)$}}
\put(71,43){\rotatebox{36}{$\mathcal{O}(n_xn_y)$}}
\put(34,50){\rotatebox{57}{$\mathcal{O}(n_x^3n_y)$}}
\put(43,-1){$n_x$, $n_y$}
\put(-2,25){\rotatebox{90}{Execution time}}
\end{overpic}
\caption{Computation cost and complexity for solving the matrix equations $\tilde{A}_1 X_{22} \tilde{C}_1^T = \tilde{F}$,~\eqref{eq:matrixEquationRankTwo}, 
and~\eqref{eq:linearsystemGeneral}, where $\tilde{A}_j\in\mathbb{C}^{(n_y-K_y)\times (n_y-K_y)}$ and $\tilde{C}_j\in\mathbb{C}^{(n_x-K_x)\times (n_x-K_x)}$
are almost banded with a bandwidth of $10$. 
Solving~\eqref{eq:GeneralizedmatrixEquation} is the dominating computational cost when solving PDO of splitting rank~$1$, $2$, and $k\geq3$.
}
\label{fig:complexity}
\end{figure}	

Figure~\ref{fig:complexity} shows the computational time for solving $\tilde{A}_1 X_{22} \tilde{C}_1^T = \tilde{F}$, 
~\eqref{eq:matrixEquationRankTwo}, and~\eqref{eq:linearsystemGeneral}, where the matrices are almost banded with a
bandwidth of $10$. The typical dominating computational cost of the solver for PDOs with splitting rank~$1$, ~$2$, 
and $k\geq 3$ is the matrix equation solve. In particular, Figure~\ref{fig:complexity} shows the substantial 
efficiency gain that can be achieved when the splitting rank structure of a PDO is exploited.

\begin{remark}
The Haidvogel--Zang algorithm~\cite{Haidvogel_79_01} (also see~\cite[Chap.~15]{Boyd_01_01}) solves the Helmholtz equation by diagonalizing the operator in only one direction 
and applying an $\O(n)$ solver in the remaining direction.  This fits naturally into the proposed framework, using the generalized 
Schur decomposition in one dimension and exploiting the almost-banded structure of the ultraspherical discretization in the other.  A variant of 
this idea is used in the Julia implementation~\cite{ApproxFun} by applying the adaptive QR algorithm to determine the appropriate 
discretization size~\cite{Olver_14_01}.
\end{remark}

\subsection{Solving subproblems}
If the even and odd modes of the solution decouple, then the computational cost can be reduced 
by solving for them separately.  For example, Laplace's equation with Dirichlet conditions can be split into four subproblems since 
the PDO contains only even order derivatives in $x$ and $y$ and the boundary conditions can be equivalently written 
as
\[
\mathcal{B}_x = \begin{pmatrix}1&0&1&0&1&\cdots\\[3pt]0&1&0&1&0&\cdots\end{pmatrix},\qquad \mathcal{B}_y = \begin{pmatrix}1&0&1&0&1&\cdots\\[3pt]0&1&0&1&0&\cdots\end{pmatrix}.
\]
This means that the even and odd modes decouple and in this 
case, since the Laplace operator has a splitting rank of~$2$, the computational cost is 
reduced by a factor of $8$ by solving four subproblems. 

In fact, any PDO with constant coefficients that contains only even (or odd) order derivatives 
in one variable accompanied with pure Dirichlet or pure Neumann boundary conditions 
decouples into two subproblems. Moreover, if it contains only even (or odd) 
order derivatives in both variables then it decouples into 
four subproblems.
Our implementation automatically detects these cases and splits the problem into two or four subproblems as appropriate.

In principle, higher order symmetries (see~\cite[Chap.~9]{Boyd_01_01} and~\cite{Li_14_01}) could be detected and exploited by our solver.
However, we have decided not to do this because such symmetries appear less often in practice. 

\section{Numerical examples}\label{sec:numericalexamples}
We now demonstrate our 2D spectral method on five examples. A {\sc Matlab} implementation is 
available as part of {\sc Chebfun}~\cite{Chebfun} via the \texttt{chebop2} command. An experimental implementation 
is also available in the {\sc ApproxFun} package~\cite{ApproxFun} written in the Julia language~\cite{Julia}, 
and timings are given when available for comparison.

\subsection*{Example 1: The Helmholtz equation}
First, we consider the Helmholtz equation $u_{xx}+u_{yy}+K^2u=0$ on $[-1,1]^2$ with Dirichlet boundary conditions, where 
$K$ is some wavenumber. This simple example is used to verify that our global spectral method resolves
oscillatory solutions with an average of $\pi$ degrees of freedom per wavelength. In particular, we set $K = \sqrt{2}\omega$ and solve
\begin{equation}\label{eq:helmholtz}
u_{xx} + u_{yy} + (\sqrt{2}\omega)^2 u = 0, \qquad u(\pm 1,y) = f(\pm 1,y), \quad u(x,\pm 1) = f(x,\pm 1),
\end{equation}
where $\omega\in\mathbb{R}$ and $f(x,y) = \cos(\omega x)\cos(\omega y)$.  The exact solution is $u=f$. In Figure~\ref{fig:Helmholtz} we plot the solution for $\omega = 50$ and plot 
the Cauchy error for $\omega = 10\pi, 50\pi, 100\pi$. The Cauchy error shows that the solution 
is rapidly resolved once $\pi$ degrees of freedom per wavelength are used (in agreement with the Shannon--Nyquist sampling rate~\cite{Shannon_98_01}). 

For $\omega = 100\pi$ in~\eqref{eq:helmholtz} we have 
\[
\left(\int_{-1}^1 \int_{-1}^1 \left(\tilde{u}(x,y) - u(x,y)\right)^2 dxdy \right)^{\frac{1}{2}} = 5.44\times 10^{-10},
\]
where $u$ is the exact solution and $\tilde{u}$ is the computed solution. This error is relatively 
small considering that the solution has more than $20,\!000$ local extrema in $[-1,1]^2$. The
solution $\tilde{u}$ was computed in\footnote{Experiments were performed on a 2012 1.8GHz Intel Core i7 MacBook Air 
with {\sc Matlab} 2012a.} $6.06$ seconds.  (The Julia implementation takes $3.90$ seconds.) The implementation automatically set up subproblems, which reduced the 
computational time by a factor of about $8$. 

\begin{figure} 
\centering 
\begin{minipage}{.49\textwidth} 
\centering 
\includegraphics[width=\textwidth]{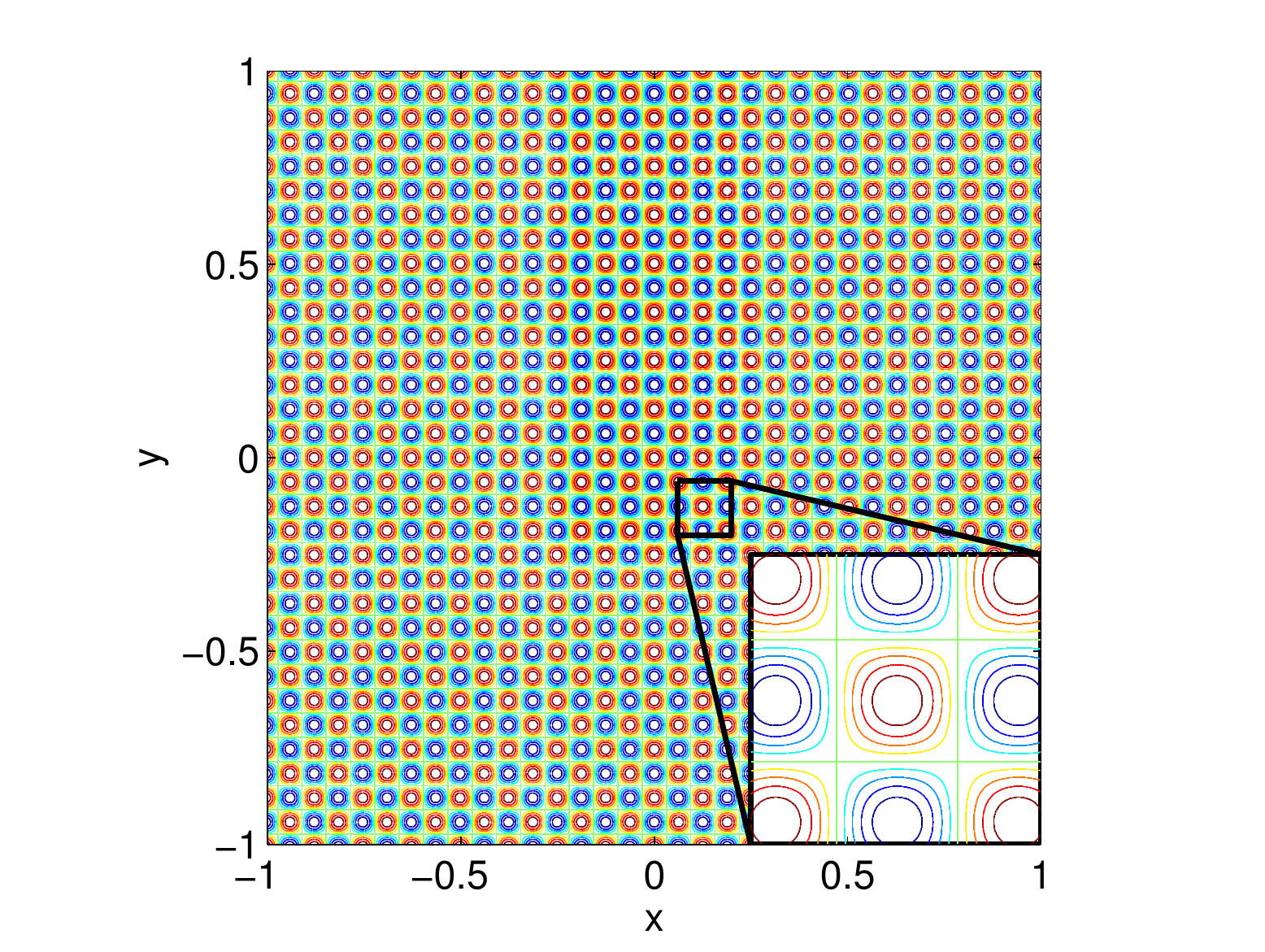}
\end{minipage}
\begin{minipage}{.49\textwidth} 
\centering 
\begin{overpic}[width=\textwidth]{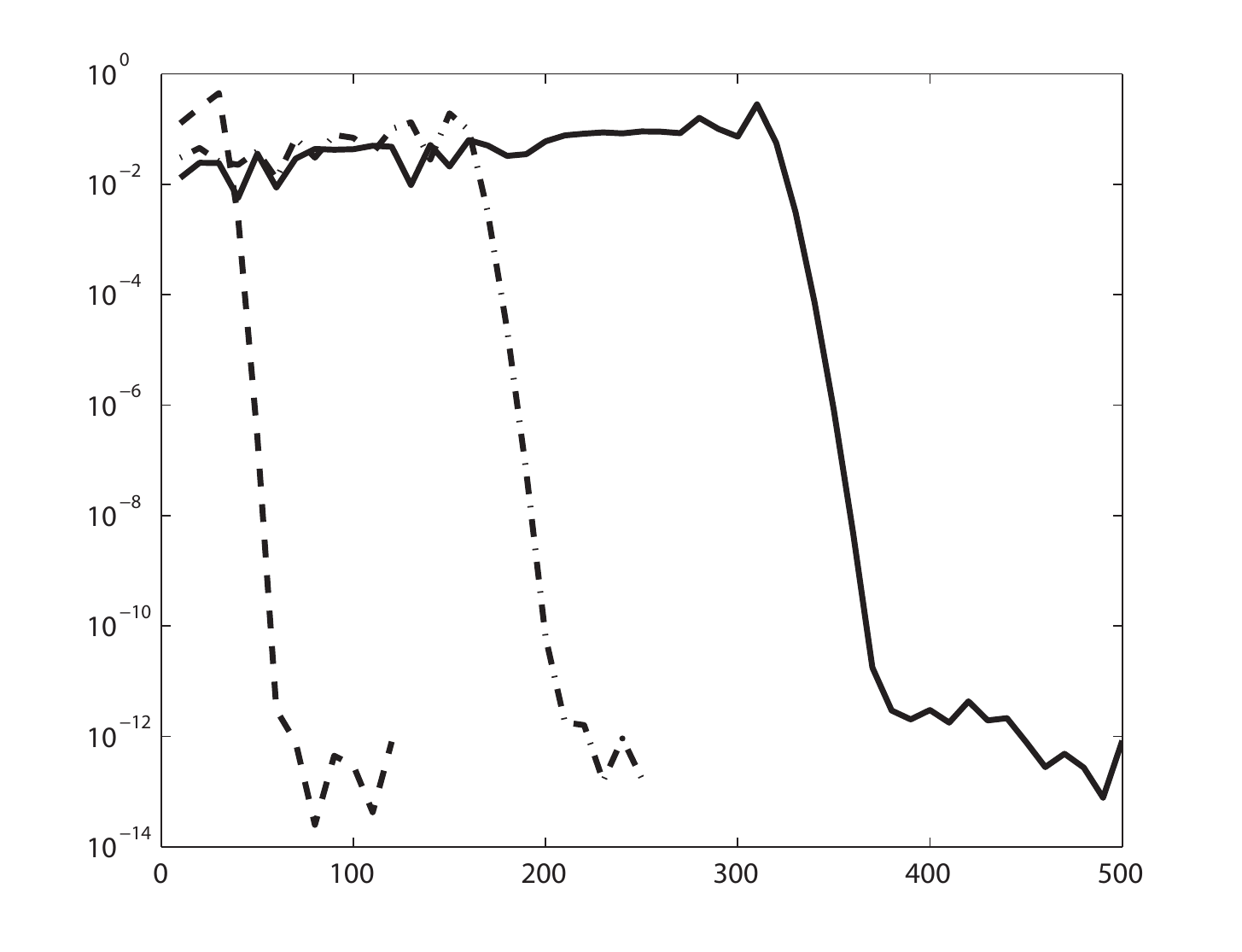}
\put(0,20){\footnotesize{\rotatebox{90}{$\left\| u_{\lceil 1.01n \rceil} - u_{n} \right\|_2$}}}
\put(50,0){\footnotesize{$n$}}
\put(22,40){\rotatebox{275}{\scriptsize{$\omega\!=\!10\pi$}}}
\put(44,40){\rotatebox{275}{\scriptsize{$\omega\!=\!50\pi$}}}
\put(69,40){\rotatebox{278}{\scriptsize{$\omega\!=\!100\pi$}}}
\end{overpic}
\end{minipage}
\caption{Left: Solution of~\eqref{eq:helmholtz} for $\omega = 50$. Right: Cauchy error for 
the solution's coefficients for $\omega = 10\pi$ (dashed), $\omega=50\pi$ (dot-dashed), and $\omega=100\pi$ (solid), which shows the 2-norm difference between the coefficients of the 
approximate solution when computed from an $n\times n$ and an $\lceil 1.01 n\rceil \times \lceil 1.01n \rceil$ discretization.}
\label{fig:Helmholtz} \end{figure}

The convergence behavior for this example is not typical for Helmholtz equations because 
the solution does not contain a weak corner singularity. Figure~\ref{fig:VariableHelmholtz} (left)
shows the more typical Cauchy error plot for Helmholtz equations: For low 
discretization sizes there is no decay of the Cauchy error (more degrees of freedom are required 
to reach Nyquist's sampling rate), followed by a short-lived but rapid geometric or super-geometric 
decay (resolving the smooth part of the solution), and then a slower algebraic decay of the error 
(resolving the weak corner singularity of the solution). Our 2D spectral method allows for quite 
large discretization sizes, so despite only algebraic decay the solution can still be resolved 
to a high accuracy.

\subsection*{Example 2: A variable coefficient Helmholtz equation}
Next, to make the Helmholtz equation more challenging we add a variable wave number and a 
forcing term.  Consider $\nabla^2 u + k(x,y)u = f(x,y)$ on $[-1,1]^2$, 
where $k(x,y) = (x^2+(y+1)^2)\sin(x(y+1))^2$ and $f(x,y) = (x^2+(y+1)^2)\cos(x(y+1))\sin(\cos(x(y+1)))$
with Dirichlet data so that the solution is $u(x,y) = \cos(\cos(x(y+1)))$.  
The PDE has an operator that has an unbounded splitting rank; however, numerically 
the operator can be well-approximated by an operator with a splitting rank of $9$ (the exact separable approximation 
is calculated using Proposition~\ref{lem:rankdiffop2} and the tensor-train decomposition). This structure can then be used to discretize the PDE as a 
generalized Sylvester matrix equation involving $9$ terms of the form~\eqref{eq:matrixEquationRankTwo}.  
All this happens automatically and the PDE can be solved in \texttt{chebop2} with the following syntax: 
\begin{verbatim} 
N = chebop2(@(x,y,u) lap(u)+(x.^2+(y+1).^2).*sin(x.*(y+1)).^2.*u); 
N.lbc = @(y) cos(cos(-y-1)); N.rbc = @(y) cos(cos(y+1));
N.dbc = @(x) cos(cos(-x-1)); N.ubc = @(x) cos(cos(x+1));
f = chebfun2(@(x,y) (x.^2+(y+1).^2).*cos(x.*(y+1)).*sin(cos(x.*(y+1))));
u = N \ f; 
\end{verbatim} 

Figure~\ref{fig:VariableHelmholtz} (right) shows a surface plot of the solution, which is calculated 
to an accuracy of $14$-digits. PDOs with splitting rank $\geq 3$ are solved less efficiently because 
the linear algebra required to solve the matrix equation is more expensive requiring large block 
almost banded matrices constructed by multiplying out Kronecker products. However, the underlying 
automated process for constructing discretization and solving the resulting 
matrix equation is applicable to any variable 
coefficient PDO. 
%Next, to make the Helmholtz equation more challenging we add a variable wave number and a 
%forcing term.   For instance, consider $\nabla^2 u + 10(1+cos(xy)^2)u = \cos(xy)$ on $[-1,1]^2$ 
%with Dirichlet data.  The PDE can be solved in \texttt{chebop2} with the following syntax: 
%\begin{verbatim} 
% N = chebop2(@(x,y,u) laplacian(u)+10*(1+cos(x.*y).^2).*u); 
% N.lbc = 1; N.rbc = 1; N.ubc = 1; N.dbc = 1; 
% u = N \ chebfun2(@(x,y) cos(x.*y)); 
%\end{verbatim} 
%
%The splitting rank of the corresponding PDO is~$2$, which can be calculated automatically by using Proposition~\ref{lem:rankdiffop2} and this structure can then be used to discretize the PDE as a 
%generalized Sylvester matrix equation of the form~\eqref{eq:matrixEquationRankTwo}.  
%Figure~\ref{fig:VariableHelmholtz} shows a surface plot of the solution and a Cauchy error 
%plot. The convergence behavior of the Cauchy error is typical for Helmholtz equations: For low discretization sizes there is no decay of the Cauchy error (more degrees of freedom are required to reach Nyquist's sampling rate), followed by a short-lived but rapid geometric or super-geometric decay (resolving the smooth part of the solution), and then a slower algebraic decay of the error (resolving the weak corner singularity of the solution). Our 2D spectral method allows for quite large discretization sizes, so despite only algebraic decay 
%the solution can still be resolved to a high accuracy.
%
\begin{figure} 
\centering 
\begin{minipage}{.49\textwidth} 
\centering 
\begin{overpic}[width=\textwidth]{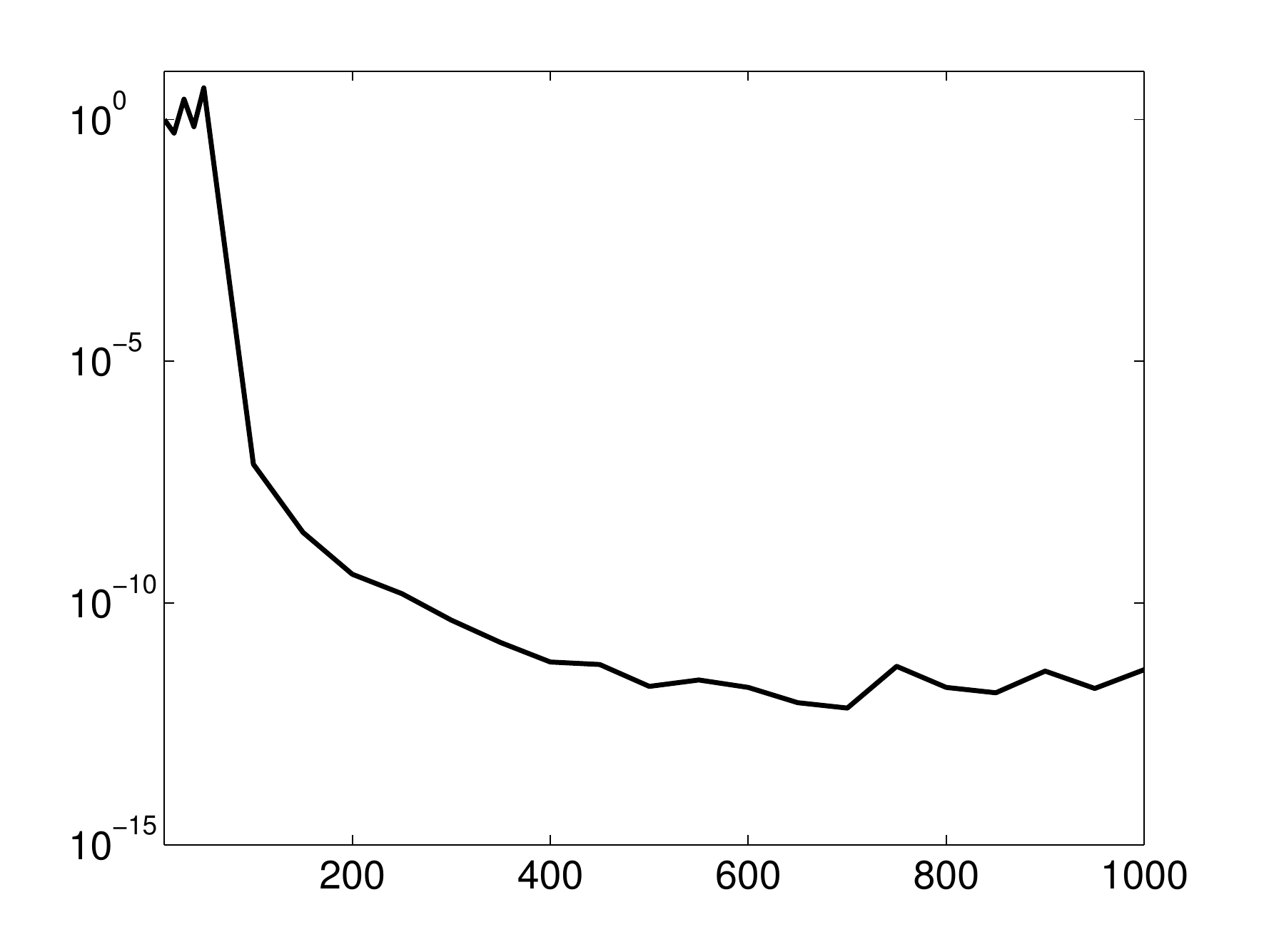}
\put(0,20){\footnotesize{\rotatebox{90}{$\left\| u_{\lceil 1.01n \rceil} - u_{n} \right\|_2$}}}
\put(50,0){\footnotesize{$n$}}
\end{overpic}
\end{minipage}
\begin{minipage}{.49\textwidth} 
\centering 
\begin{overpic}[width=\textwidth]{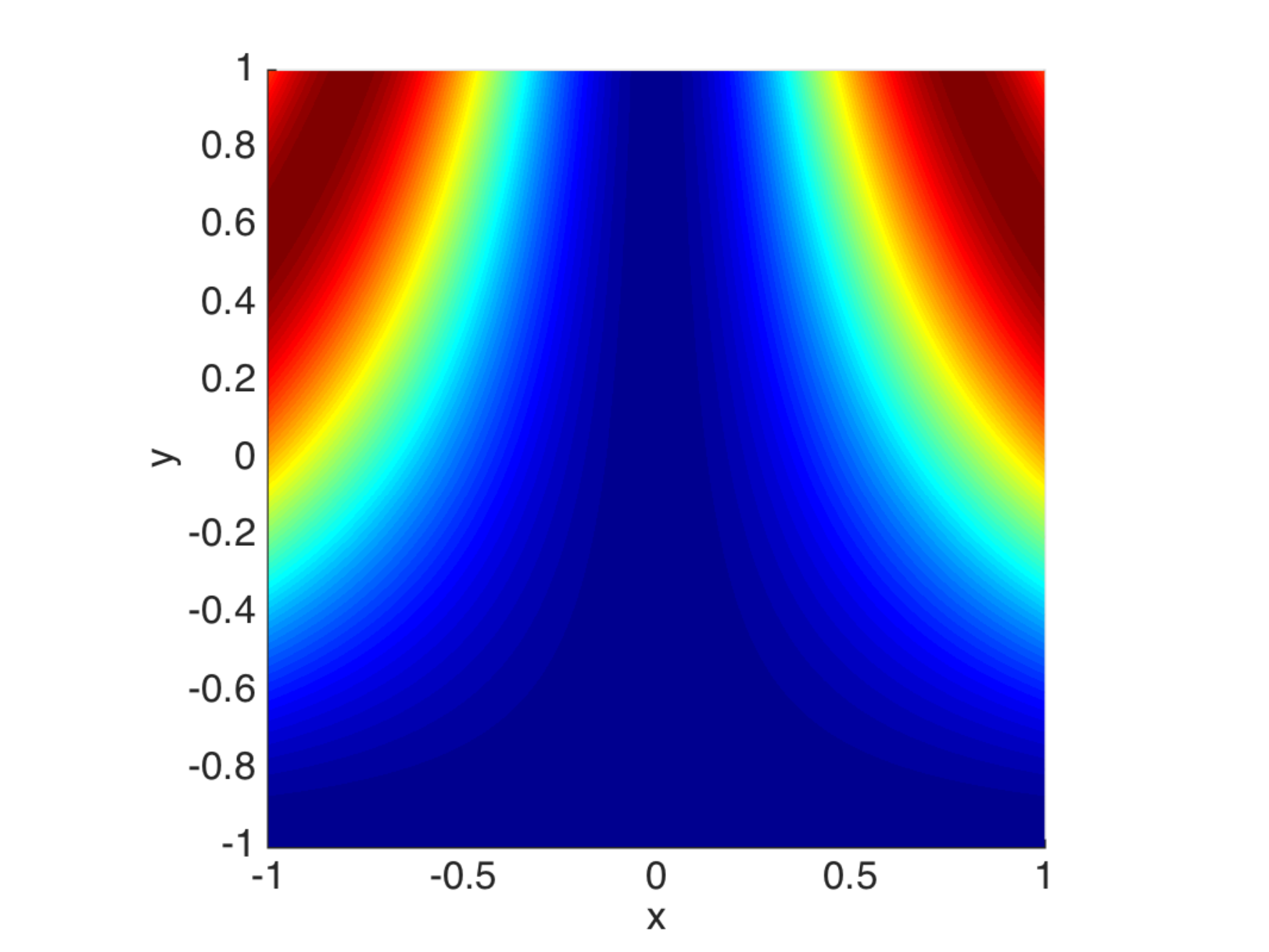}
\end{overpic}
\end{minipage}
\caption{
Left: Typical Cauchy error plot for a Helmholtz solution's coefficients, showing 
the 2-norm difference between the coefficients of the 
approximate solution when discretized by an $n\times n$ 
and $\lceil 1.01 n\rceil \times \lceil 1.01n \rceil$ matrix equation. 
Usually, the solution has a weak corner singularity that appears as 
algebraic convergence of the Cauchy error.
Right: The solution to $\nabla^2 u + k(x,y)u = f(x,y)$ on $[-1,1]^2$, 
where $k(x,y) = (x^2+(y+1)^2)\sin(x(y+1))^2$ and 
$f(x,y) = (x^2+(y+1)^2)\cos(x(y+1))\sin(\cos(x(y+1)))$
with Dirichlet data so that the solution is $u(x,y) = \cos(\cos(x(y+1)))$.}
\label{fig:VariableHelmholtz} \end{figure}

\subsection*{Example~3: The wave equation and the Klein--Gordon equation}
Next we consider the wave equation $u_{tt} = u_{xx}$ modeling a string of length~$2$ initially in a moment of time symmetry  $u_t(x,0)=0$ with displacement  $u(x,0) = e^{-50(x-2/10)^2}$, held fixed on the left $u(-1,t) =0$,  and held by a vertical elastic band on the right $u(1,t) + 5u_x(1,t) =0$. The string is left
to vibrate freely for $10$ units of time. We will compare this solution to that of the Klein--Gordon equation $u_{tt}=u_{xx}-5u$ with the same boundary conditions. The latter equation can be solved by the following {\sc Chebop2} code: 
\begin{verbatim} 
 N = chebop2(@(u) diff(u,2,1) - diff(u,2,2) + 5*u, [-1 1 0 10]); 
 N.lbc = 0; N.rbc = @(t,u) u/5 + diff(u); 
 N.dbc = @(x,u) [u-exp(-50*(x-.2).^2) ; diff(u)];
 u = N \ 0;
\end{verbatim} 

In Figure~\ref{fig:WaveGordon} we plot the solutions side-by-side. It can be seen that 
the solution to the wave equation (left) has the initial pulse traveling at a constant speed 
reflecting with equal and opposite amplitude off the left and with 
equal sign (but not quite equal amplitude) from the right. This is typical  
reflection behavior of traveling waves with these boundary conditions. In contrast, in the solution to the Klein--Gordon 
equation (right) high frequencies of the pulse travel faster than low frequencies 
and interference quickly destroys any regular pattern. We require about $2.02$ seconds
to resolve the Klein--Gordon solution to $8$-digits of accuracy with a $(92,\!257)$ degree bivariate polynomial.   (The Julia implementation takes $0.89$ seconds.)
\begin{figure} 
\centering 
\begin{minipage}{.49\textwidth} 
\centering 
\begin{overpic}[width=\textwidth]{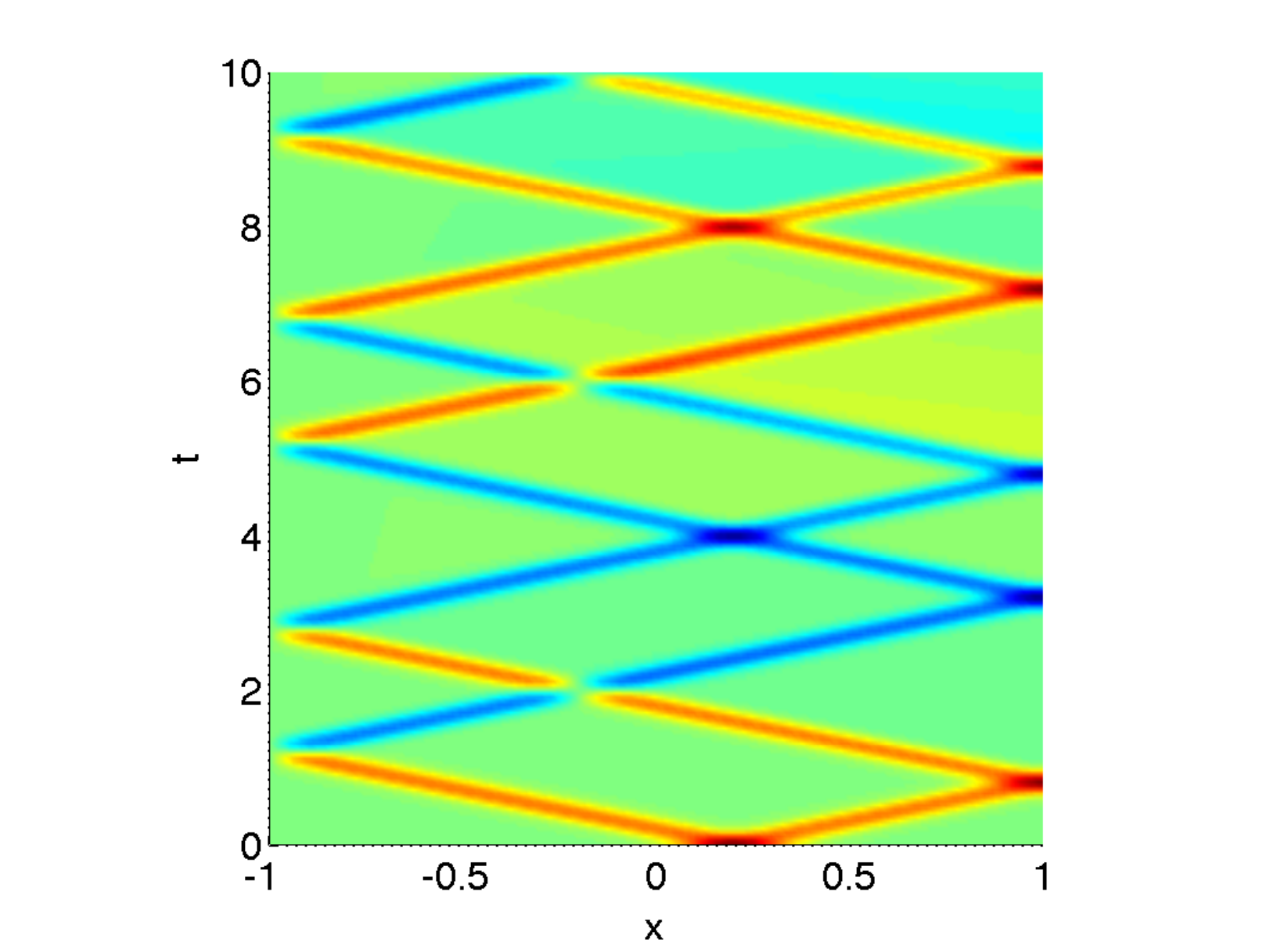}
\end{overpic}
\end{minipage}
\begin{minipage}{.49\textwidth} 
\centering 
\begin{overpic}[width=\textwidth]{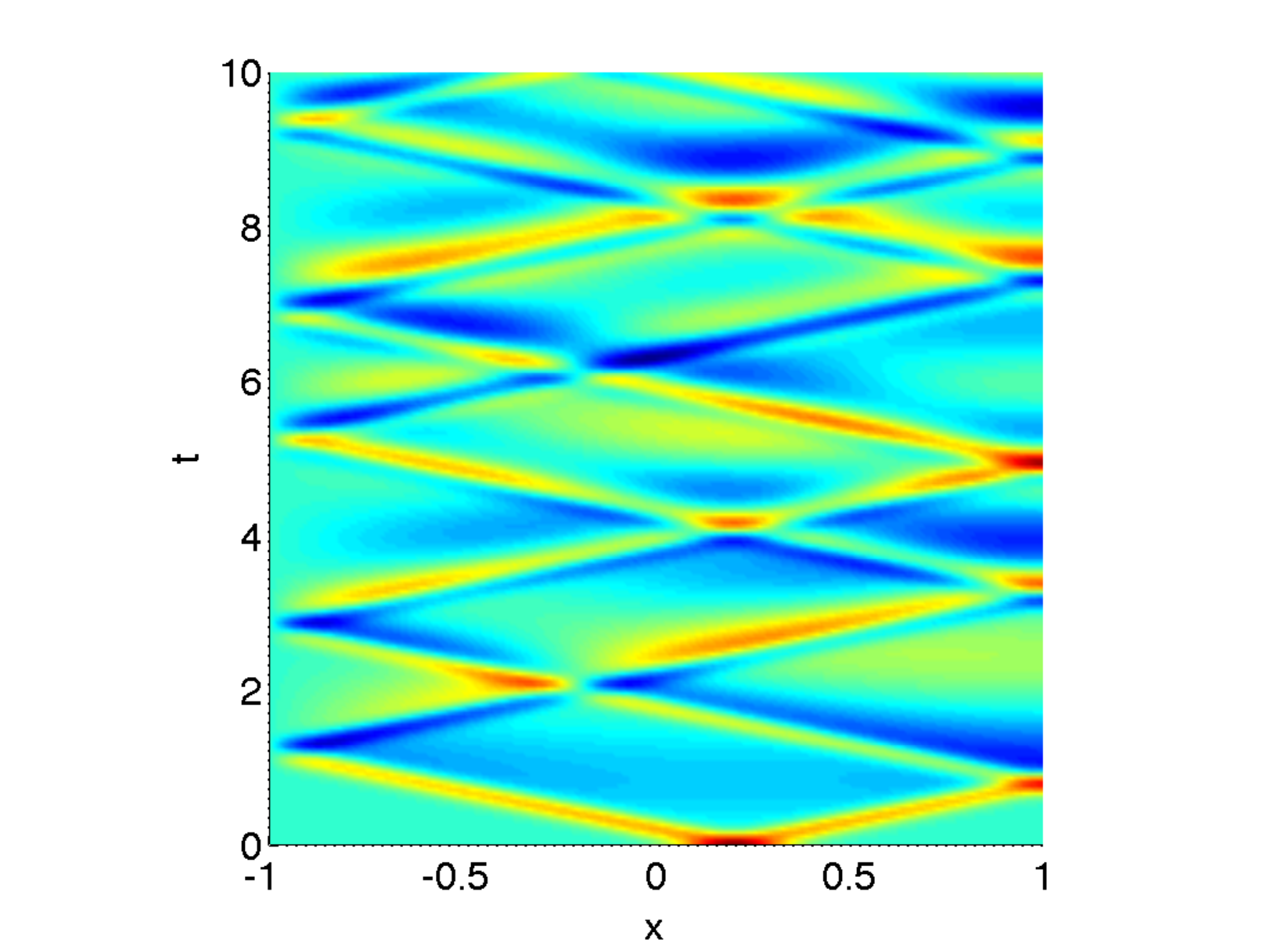}
\end{overpic}

\end{minipage}
\caption{Left: The solution to the wave equation 
$u_{tt}=u_{xx}$ with $u(x,0) = e^{-50(x-2/10)^2}$, $u_t(x,0)=0$, $u(-1,t) =0$, 
and $u(1,t) + 5u_x(1,t) =0$. Right: The solution to the 
Klein--Gordon equation $u_{tt}=u_{xx} - 5u$ with the same boundary conditions 
as for the wave equation. }
\label{fig:WaveGordon} \end{figure}

\subsection*{Example~4: The time-dependent Schr\"odinger equation}
For the fourth example we consider the time-dependent Schr\"odinger equation on $[0,1] \times [0,0.54]$,
\begin{equation} 
\I \epsilon u_t = -\frac{1}{2}\epsilon^2 u_{xx} + V(x) u,
\label{eq:Schrodinger} 
\end{equation} 
with $u(0,t) = 0$, $u(1,t)=0$, and an initial condition $u(x,0) = u_0(x)$, where
\[
u_0(x) = e^{-25(x-1/2)^2} e^{-{\I / (5 \epsilon)} \log (2 \cosh ( 5(x-1/2)))}.
\]
In Figure~\ref{fig:BaoSchrodingerRe0256} we take $\epsilon = 0.0256$ and plot the real part of the solution when $V(x)=10$ (left) and $V(x) = x^2$ (right). In both cases, we see the formation of a caustic.  
\begin{figure} 
\centering 
\begin{minipage}{.49\textwidth} 
\centering 
\includegraphics[width=\textwidth]{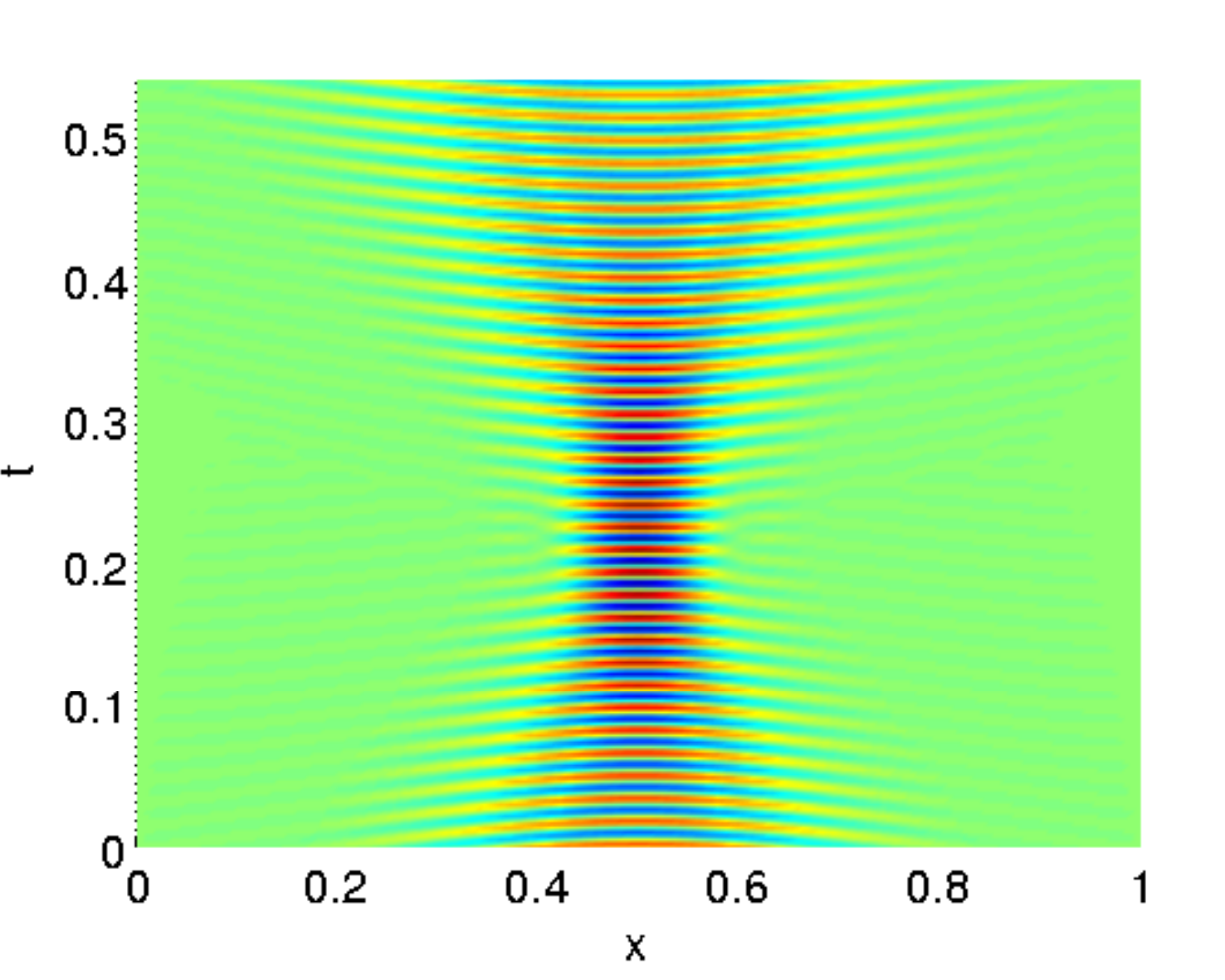}
\end{minipage}
\begin{minipage}{.49\textwidth} 
\centering 
\includegraphics[width=\textwidth]{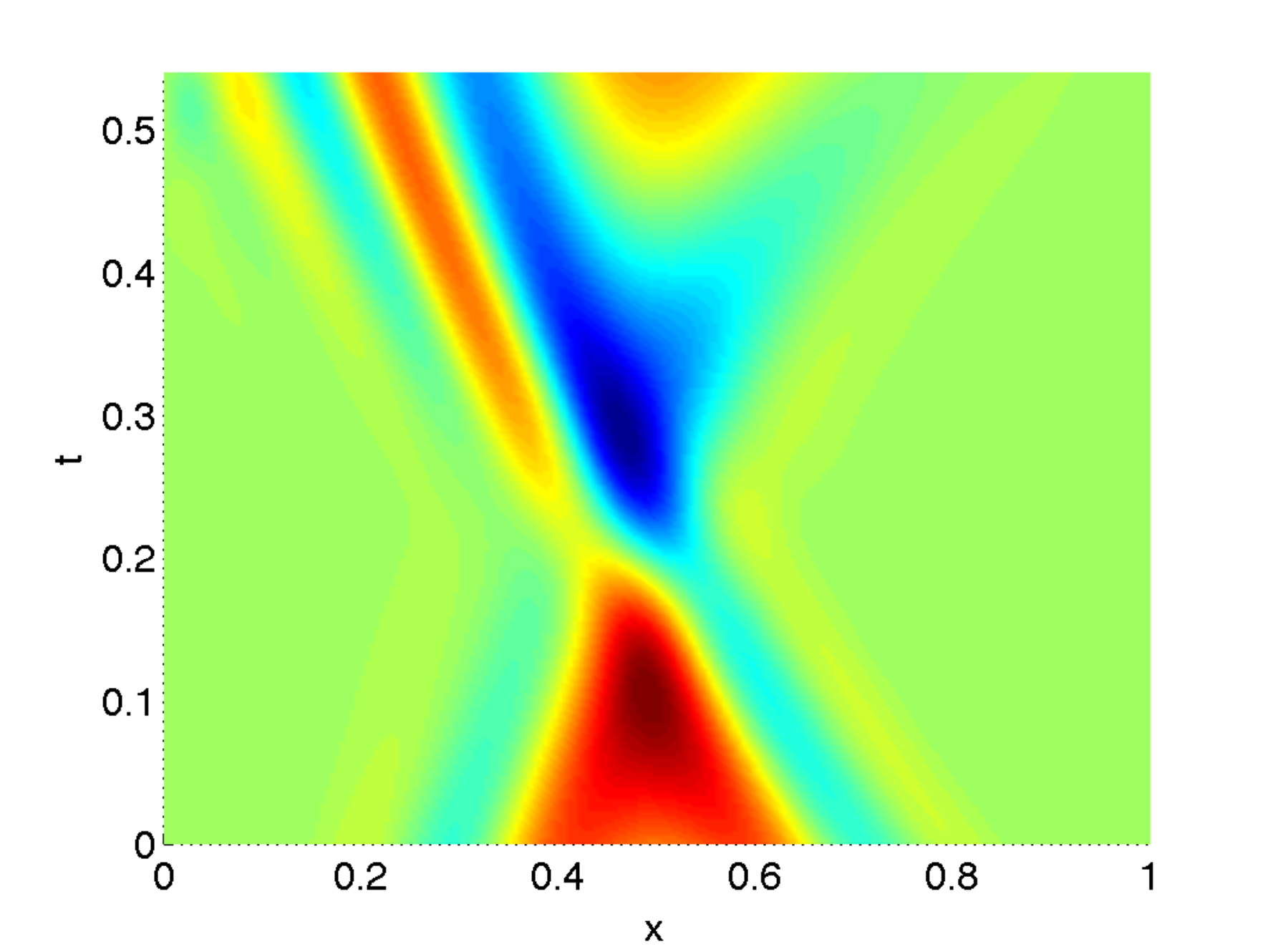}
\end{minipage}
\caption{The real part of the solution to~\eqref{eq:Schrodinger} 
with $\epsilon = 0.0256$ for $V(x) = 10$ (left) and $V(x) = x^2$ (right).}
\label{fig:BaoSchrodingerRe0256}
\end{figure}

In \figref{BaoSchrodinger0256} we plot $\abs{u(x,0.54)}^2$ where $u$ is the solution to~\eqref{eq:Schrodinger} with $V(x) = 10$  and $\epsilon = 0.0064$ (left). Our results are consistent with~\cite[Fig.~2b]{BaoSemiclassicalSchrodinger}, which used periodic boundary conditions in place of Dirichlet. In~\figref{BaoSchrodinger0256} (right) we plot the real and imaginary part of $\abs{u(x,0.54)}^2$ for $V(x) = x^2$ and $\epsilon = 0.0256$. We include this example as a demonstration of the versatility of our 2D spectral method and are not arguing that it is computationally competitive to custom built methods such as those in~\cite{BaoSemiclassicalSchrodinger}.

\begin{figure} 
\centering 
\begin{minipage}{.49\textwidth} 
\centering 
\begin{overpic}[width=\textwidth]{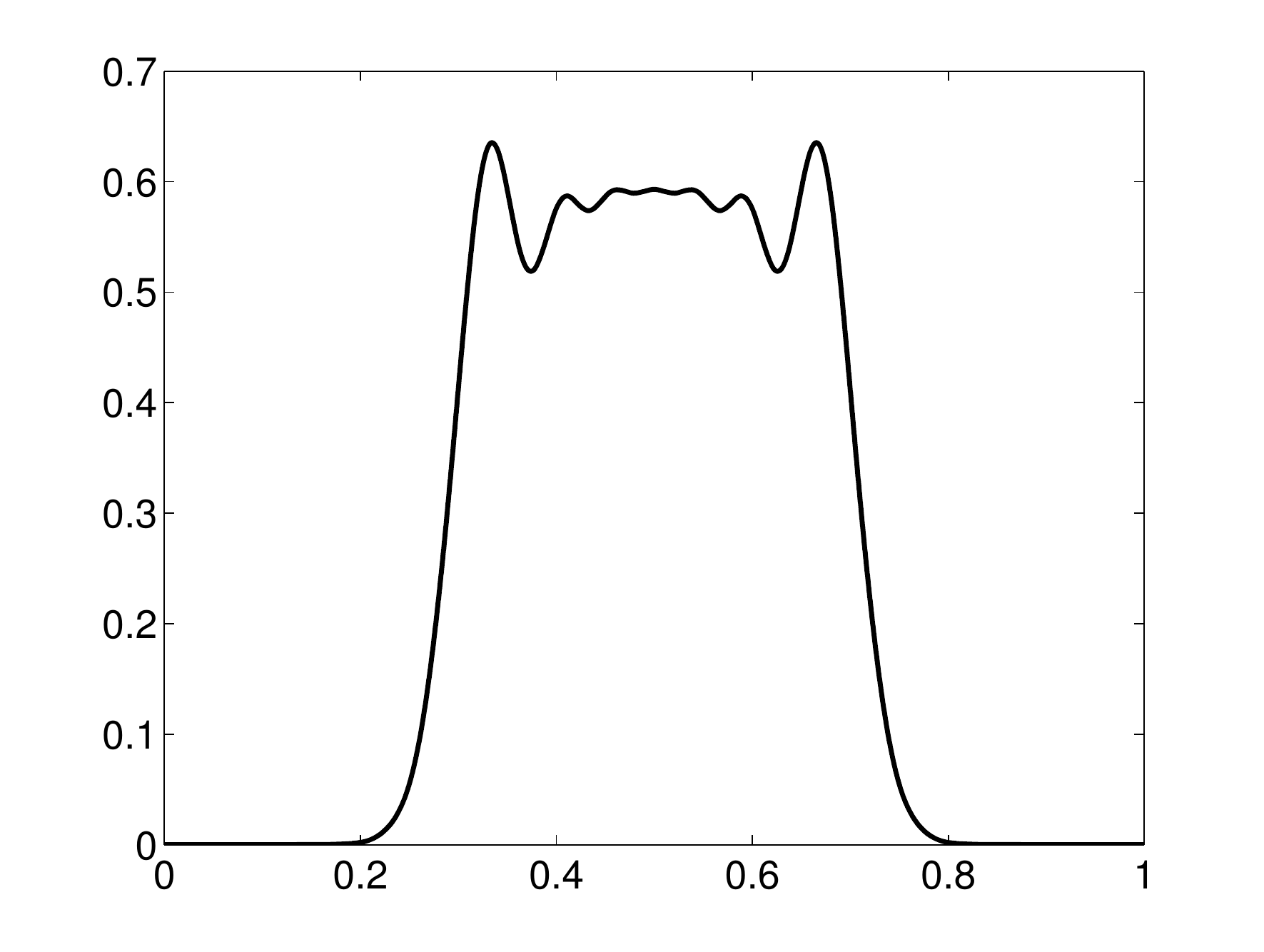}
\put(2,30){\footnotesize{\rotatebox{90}{$|u(x,.54)|^2$}}}
\put(50,0){\footnotesize{$x$}}
\end{overpic}
\end{minipage}
\begin{minipage}{.49\textwidth} 
\centering 
\begin{overpic}[width=\textwidth]{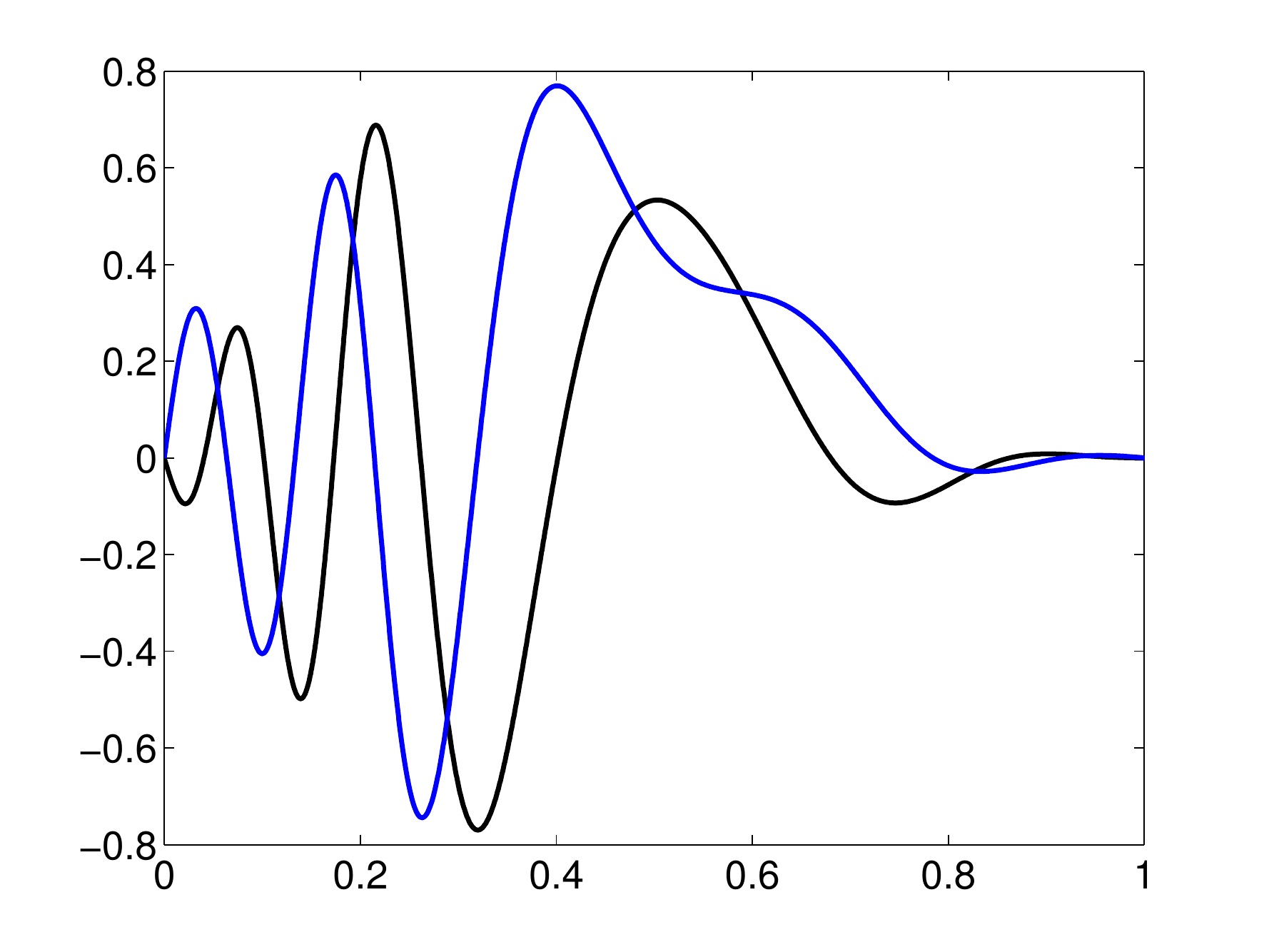}
\put(50,0){\footnotesize{$x$}}
\put(45,30){\footnotesize{${\rm Re}(u)$}}
\put(68,48){\footnotesize{${\rm Im}(u)$}}
\end{overpic}
\end{minipage}
\caption{Solution to the time-dependent Schr\"odinger equation at $t = 0.54$.  Left: The quantity $|u(x,0.54)|^2$ when $\epsilon = 0.0064$ and $V(x) = 10$. Right: The real (black) and imaginary (blue) part of $u(x,0.54)$ when $\epsilon = 0.0256$ and $V(x) = x^2$.}
\label{fig:BaoSchrodinger0256}
\end{figure}	

\subsection*{Example~5: The biharmonic equation}
The last example we consider is the biharmonic equation, a fourth order PDE, given by
\[
 u_{xxxx} + u_{yyyy} +2 u_{xxyy} = 0, \qquad (x,y)\in[-1,1]^2
\]
with Dirichlet and Neumann boundary data corresponding to the function 
\[
v(x,y) = {\rm Im}\left( (x-\I y) e^{-2(x+\I y)} + \cos(\cos(x+\I y))\right)
\]
so that the solution is $u=v$. The implementation adaptively finds that a bivariate Chebyshev expansion
of degree $(30,30)$ is sufficient to resolve the solution to a maximum absolute error of $4.59\times 10^{-13}$ taking $2.52$ seconds. 
This is a PDO with a splitting rank of~$3$ and hence the algorithm solves a large (but almost banded) linear system rather than a Sylvester matrix equation (see Section~\ref{sec:matrixsolver}). When the underlying 
PDO has a splitting rank of~$k\geq3$, the almost banded structure of ultraspherical spectral discretizations
allows for fast linear algebra and hence, an $\mathcal{O}(\min(n_x^{3} n_y,n_x n_y^{3}))$ complexity of the solver.

\section{Future work}\label{sec:future} 

The approach presented extends naturally to vector-valued PDOs.  As an example, consider the bivariate Stokes flow 
equation with zero Dirichlet conditions on $u$ and $v$:
	\meeq{
		{u(\pm1,y)=u(x,\pm1)}= 0,\qquad v(\pm 1,y)=v(x,\pm1) = 0,\ccr
		{\partial^2  u \over \partial x^2} + {\partial^2  u \over \partial y^2}  - {\partial p \over \partial x} = -f_x, \ccr
		{\partial^2  v \over \partial x^2} + {\partial^2  v \over \partial y^2}  - {\partial p \over \partial y} = -f_y \qquad \hbox{and} \ccr
		{\partial u \over \partial x} + {\partial v \over \partial y} = 0.
		}
We can represent this in as a system of generalized Sylvester matrix equations as follows: 
	\meeq{ 
				{\mathfrak{B} U = U \mathfrak{B}^\top} = 0,\qquad
				\mathfrak{B} V = V \mathfrak{B}^\top = 0, \ccr				
				{\cal D}_2 U {\cal S}_0^\top {\cal S}_1^\top + {\cal S}_1 {\cal S}_0 U {\cal D}_2^\top - {\cal S}_1 {\cal D}_1 P {\cal S}_0^\top {\cal S}_1^\top = -F_x, \ccr
				{\cal D}_2 V {\cal S}_0^\top {\cal S}_1^\top + {\cal S}_1 {\cal S}_0 V {\cal D}_2^\top - {\cal S}_1 {\cal S}_0 P {\cal D}_1^\top {\cal S}_1^\top = -F_y \qquad\hbox{and}\ccr				
				{\cal D}_1 U {\cal S}_0^\top +{\cal S}_0 V {\cal D}_1^\top  = 0				
		}
The boundary conditions can be used to remove the dependency of the equation on the first two rows and columns of $U$ and $V$.  
The resulting reduced matrix equation can then be solved by vectorizing the matrices $U$, $V$, and $P$ and constructing a block-wise version 
of the Kronecker product of the operators.  This results in a significant increase in the bandwidth of the resulting operators, which may mean that this
approach is not competitive without the use of iterative solvers.

The technique of automatic differentiation is far more powerful than we have described and can be extended to compute 
the Fr\'{e}chet derivatives of nonlinear partial differential equation, allowing one to ``linearize'' and apply Newton's method in function 
space.  For many years a similar approach has been employed in 1D to solve nonlinear ODEs in {\sc Chebop}~\cite{Birkisson_12_01}.  
Unfortunately, rank-2 linear PDOs will rarely arise after linearization and the bandwidth of the operators will be comparable to the 
discretization required.

The spectral method we have described does, with some extra complications, extend to domains that can be decomposed into rectangles such 
as L-shaped domains. Such domains can be dealt with by solving coupled generalized Sylvester matrix equations 
with extra constraints imposing continuity of the solution.  A significant challenge is to resolve potentially strong 
corner singularities in a solution that can result from intruding corners of the domain. 
General domains present a major challenge for global spectral methods.  

We have presented a fast direct solver for PDOs with a splitting rank of~$2$, requiring $\mathcal{O}((n_xn_y)^{3/2})$ 
operations to compute a degree $(n_x,n_y)$ bivariate polynomial approximation. However, for PDOs with a splitting rank of
$k\geq 3$ we constructed a large almost banded matrix and solved the resulting 
linear system (see Section~\ref{sec:matrixsolver}) in $\mathcal{O}(\min(n_x^3n_y,n_xn_y^3))$ operations. 
It would be interesting to investigate possible direct algorithms for solving generalized Sylvester matrix 
equations of the form~\eqref{eq:matrixequation} with $k\geq 3$ terms. 

\section*{Conclusion} 
We have described a spectral method for solving linear PDEs defined on rectangles. 
The first step was to extract the 
variable coefficients of a PDO from an anonymous operator using automatic differentiation. 
Then, by calculating a separable representation for the PDO we exploited the remarkable 
properties of the 1D ultraspherical spectral method to achieve a general, automated, and 
fast linear 2D PDE solver. The resulting 2D spectral method has a complexity of 
$\mathcal{O}(n_x^3 + n_y^3)$ for PDOs with a splitting rank of~$2$, when the solution is approximated by 
a bivariate polynomial of degree $n_x$ in $x$ and degree $n_y$ in $y$. The solver 
is part of {\sc Chebfun} and is able to accurately solve a wide range of variable coefficient PDEs.

\section*{Acknowledgments}
We would like to thank \'{A}sgeir Birkisson for suggesting automatic differentiation as a way to
extract out the variable coefficients of a PDO from an anonymous handle. 
We also thank Nick Trefethen for reading various drafts of 
this manuscript, and Hadrien Montanelli for carefully working through the material. 
The referees gave us excellent feedback that lead to an improvement in the paper. 
We acknowledge the support of the European Research Council under the European
Union's Seventh Framework Programme (FP7/2007-2013)/ERC grant agreement 291068 (AT) and 
the support of the Australian Research Council through the Discovery 
Early Career Research Award (SO).

\end{document}